\tikzstyle{every node}=[circle, draw, inner sep=0pt, minimum width=4pt]
\newtheorem{theorem}{Theorem}[section]
\newtheorem{lemma}{Lemma}[section]
\title{On the equality of the induced matching number and the \\
uniquely restricted matching number for subcubic graphs}
\author{M. F\"{u}rst \and D. Rautenbach}
\date{}
\begin{document}

\maketitle

\begin{center}
{\small 
Institute of Optimization and Operations Research, Ulm University, Ulm, Germany\\
\texttt{$\{$maximilian.fuerst,dieter.rautenbach$\}$@uni-ulm.de}\\[3mm]
}
\end{center}

\begin{abstract}
For a matching $M$ in a graph $G$,
let $G(M)$ be the subgraph of $G$ 
induced by the vertices of $G$ 
that are incident with an edge in $M$.
The matching $M$ is induced,
if $G(M)$ is $1$-regular,
and $M$ is uniquely restricted, 
if $M$ is the unique perfect matching of $G(M)$.
The induced matching number $\nu_s(G)$ of $G$ 
is the largest size of an induced matching in $G$,
and
the uniquely restricted matching number $\nu_{ur}(G)$ of $G$ 
is the largest size of a uniquely restricted matching in $G$.

Golumbic, Hirst, and Lewenstein 
(Uniquely restricted matchings, Algorithmica 31 (2001) 139-154)
posed the problem to characterize the graphs $G$ with $\nu_s(G)=\nu_{ur}(G)$.
We give a complete characterization of the 
$2$-connected subcubic graphs $G$ of sufficiently large order 
with $\nu_s(G)=\nu_{ur}(G)$.
As a consequence,
we are able to show that the subcubic graphs $G$ with $\nu_s(G)=\nu_{ur}(G)$
can be recognized in polynomial time.
\end{abstract}
{\small 
\begin{tabular}{lp{13cm}}
{\bf Keywords}: induced matching; strong matching; uniquely restricted matching
\end{tabular}
}

\pagebreak

\section{Introduction}
We consider only simple, finite, and undirected graphs, and use standard terminology.
For a graph $G$, and a matching $M$ in $G$, let $V(M)$ be the set of vertices 
covered by $M$, and let $G(M)$ be the subgraph of $G$ induced by $V(M)$. 
A matching $M$ in $G$ 
is \textit{induced} \cite{ca1} 
or \textit{uniquely restricted} \cite{gohile}
if $G(M)$ is $1$-regular
or $M$ is the unique perfect matching of $G(M)$, respectively.
Let $\nu(G)$, $\nu_s(G)$, and $\nu_{ur}(G)$
be the maximum sizes of 
an ordinary, 
an induced, and
a uniquely restricted matching, respectively.
Golumbic, Hirst, and Lewenstein \cite{gohile} 
observed that a matching $M$ in a graph $G$ is
uniquely restricted 
if and only if
there is no $M$-alternating cycle in $G$.
Since every induced matching is uniquely restricted,
$$\nu_s(G)\leq  \nu_{ur}(G)  \leq \nu(G)$$
for every graph $G$.
Induced matchings are also known as {\it strong} matchings.

The computational hardness of finding 
maximum induced matchings or 
maximum uniquely restricted matchings 
in a given graph has been shown in \cite{ca1, gohile, stva}.
Nevertheless, 
it can be decided in polynomial time,
whether a given graph $G$ 
satisfies $\nu(G) = \nu_s(G)$ \cite{cawa, jora, koro}
or $\nu(G) = \nu_{ur}(G)$ \cite{lema, peraso}.
In \cite{gohile}
Golumbic, Hirst, and Lewenstein 
pose the problem to characterize the graphs $G$ with $\nu_s(G)=\nu_{ur}(G)$.

As our first main result, 
we give a complete characterization 
of the $2$-connected subcubic graphs $G$ 
of sufficiently large order with $\nu_s(G)=\nu_{ur}(G)$.
Apart from some small sporadic graphs, 
all these graphs have a rather simple structure.

See Figure \ref{fig1} for illustrations of the following graphs.
\begin{itemize}
\item For a positive integer $k$,
let $L_k$ be the graph of order $3k$ 
that arises from $k$ vertices $w_1,\ldots,w_k$, 
and two disjoint paths $u_1u_2\ldots u_k$ and $v_1v_2\ldots v_k$,
by adding the edges $w_iu_i$ and $w_iv_i$ for every $i \in \lbrack k \rbrack$,
where $[k]$ denotes the set of positive integers at most $k$.
\item Let $L_k'$ arise from $L_k$ by adding the two new vertices $w_1'$ and $w_k'$,
and the six new edges 
$u_1w_1'$,
$v_1w_1'$,
$u_1v_1$,
$u_kw_k'$,
$v_kw_k'$, and
$u_kv_k$.
\item Let $\mathcal{B}_1$ be the set of all $2$-connected subcubic graphs $G$
such that there is some positive integer $k$
for which $L_k$ is an induced subgraph of $G$,
and $G$ is a subgraph of $L_k'$.
(Note that there are six non-isomorphic choices for such a graph $G$
with $L_k\subseteq G\subseteq L_k'$.)
\item Let $\mathcal{B}_2$ 
be the set of all subcubic graphs $G$
such that there is some positive integer $k$ at least $3$
for which $G$ arises from a $L_k$ by
\begin{itemize}
\item adding the two new edges $u_1v_k$ and $v_1u_k$, if $k$ is odd,
\item adding the two new edges $u_1u_k$ and $v_1v_k$, if $k$ is even.
\end{itemize}
\item Let $\mathcal{B}=\mathcal{B}_1\cup \mathcal{B}_2$.
\end{itemize}
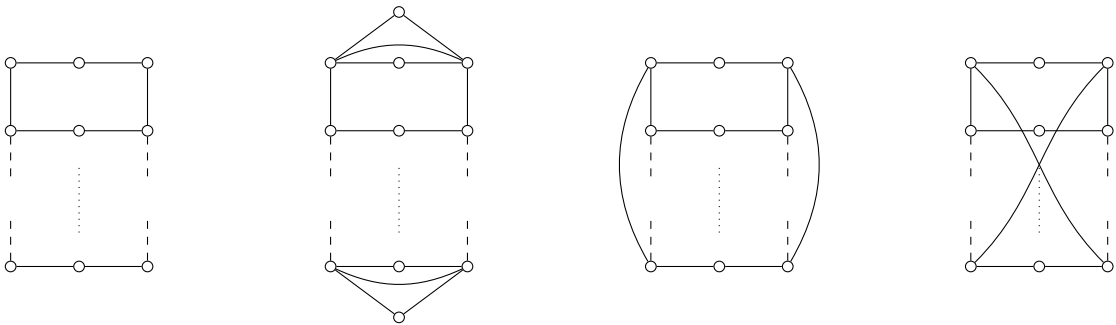
\begin{figure}[H]
\begin{minipage}{0.24\textwidth}
\centering\tiny
\begin{tikzpicture}[scale=0.9]
	    \node (v1) at (-1,4) {};
	    \node (v2) at (0,4) {};
	    \node (v3) at (1,4) {};
	    \node (w1) at (-1,3) {};
	    \node (w2) at (0,3) {};
	    \node (w3) at (1,3) {};
	    \node (x1) at (-1,1) {};
	    \node (x2) at (0,1) {};
	    \node (x3) at (1,1) {};
	    
	    \foreach \from/\to in {v1/v2, v2/v3, v1/w1, v3/w3, w1/w2, w2/w3, x1/x2, x2/x3}
	    \draw [-] (\from) -- (\to);

	    \draw[-,dotted] (0,1.5) -- (0,2.5);
	    \draw[-,dashed] (w1) -- (-1,2.25);
	    \draw[-,dashed] (w3) -- (1,2.25);
	    \draw[-,dashed] (x1) -- (-1,1.75);
	    \draw[-,dashed] (x3) -- (1,1.75);

\end{tikzpicture}
\end{minipage}
\begin{minipage}{0.24\textwidth}
\centering\tiny
\begin{tikzpicture}[scale = 0.9] 
	    \node (u) at (0,4.75) {};
	    \node (v1) at (-1,4) {};
	    \node (v2) at (0,4) {};
	    \node (v3) at (1,4) {};
	    \node (w1) at (-1,3) {};
	    \node (w2) at (0,3) {};
	    \node (w3) at (1,3) {};
	    \node (x1) at (-1,1) {};
	    \node (x2) at (0,1) {};
	    \node (x3) at (1,1) {};
	    \node (y) at (0,0.25) {};
	    \foreach \from/\to in {u/v1, u/v3, v1/v2, v2/v3, v1/w1, v3/w3, w1/w2, w2/w3, x1/x2, x2/x3}
	    \draw [-] (\from) -- (\to);
	    
	    \foreach \from/\to in {y/x1, y/x3}
	    \draw [-] (\from) -- (\to);
	    
	    \draw[-,] (x1) to[out=-25,in=-155] (x3);
	    \draw[-,] (v1) to[out=25,in=155] (v3);
	    
	    \draw[-,dotted] (0,1.5) -- (0,2.5);
	    \draw[-,dashed] (w1) -- (-1,2.25);
	    \draw[-,dashed] (w3) -- (1,2.25);
	    \draw[-,dashed] (x1) -- (-1,1.75);
	    \draw[-,dashed] (x3) -- (1,1.75);
	    
\end{tikzpicture}
\end{minipage}
\begin{minipage}{0.24\textwidth}
\centering\tiny
\begin{tikzpicture}[scale = 0.9] 
	    \node (v1) at (-1,4) {};
	    \node (v2) at (0,4) {};
	    \node (v3) at (1,4) {};
	    \node (w1) at (-1,3) {};
	    \node (w2) at (0,3) {};
	    \node (w3) at (1,3) {};
	    \node (x1) at (-1,1) {};
	    \node (x2) at (0,1) {};
	    \node (x3) at (1,1) {};
	    
	    \foreach \from/\to in {v1/v2, v2/v3, v1/w1, v3/w3, w1/w2, w2/w3, x1/x2, x2/x3}
	    \draw [-] (\from) -- (\to);
	    
	    \draw[-] (v3) to[out=-60,in=60] (x3);
	    \draw[-] (v1) to[out=-120,in=120] (x1);
	    
	    \draw[-,dotted] (0,1.5) -- (0,2.5);
	    \draw[-,dashed] (w1) -- (-1,2.25);
	    \draw[-,dashed] (w3) -- (1,2.25);
	    \draw[-,dashed] (x1) -- (-1,1.75);
	    \draw[-,dashed] (x3) -- (1,1.75);
\end{tikzpicture}
\end{minipage}
\begin{minipage}{0.24\textwidth}
\centering\tiny
\begin{tikzpicture}[scale = 0.9] 
	    \node (v1) at (-1,4) {};
	    \node (v2) at (0,4) {};
	    \node (v3) at (1,4) {};
	    \node (w1) at (-1,3) {};
	    \node (w2) at (0,3) {};
	    \node (w3) at (1,3) {};
	    \node (x1) at (-1,1) {};
	    \node (x2) at (0,1) {};
	    \node (x3) at (1,1) {};
	    
	    \foreach \from/\to in {v1/v2, v2/v3, v1/w1, v3/w3, w1/w2, w2/w3, x1/x2, x2/x3}
	    \draw [-] (\from) -- (\to);
	    
	    \draw[-] (v1) to[out=-45,in=135] (x3);
	    \draw[-] (v3) to[out=-135,in=45] (x1);
	    
	    \draw[-,dotted] (0,1.5) -- (0,2.5);
	    \draw[-,dashed] (w1) -- (-1,2.25);
	    \draw[-,dashed] (w3) -- (1,2.25);
	    \draw[-,dashed] (x1) -- (-1,1.75);
	    \draw[-,dashed] (x3) -- (1,1.75);
\end{tikzpicture}
\end{minipage}
\caption{An illustration of 
$L_k$,
$L_k'$,
a graph in $\mathcal{B}_2$ for even $k$, and 
a graph in $\mathcal{B}_2$ for odd $k$.} \label{fig1}
\end{figure}
Here is our first main result.

\begin{theorem} \label{th_char}
If $G$ is a $2$-connected subcubic graph of order at least $21$,
then $\nu_s(G) = \nu_{ur}(G)$ 
if and only if $G$ is isomorphic to a graph in 
$\mathcal{B}$. 
\end{theorem}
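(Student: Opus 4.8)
The proof splits into the two implications. For the direction that every $G\in\mathcal{B}$ satisfies $\nu_s(G)=\nu_{ur}(G)$, my plan is a direct computation based on the column structure. I would first show $\nu_s(L_k)=\nu_{ur}(L_k)=k$: the lower bound $\nu_s(L_k)\ge k$ comes from matching one rung edge in every column, choosing $w_iu_i$ for odd $i$ and $w_iv_i$ for even $i$, so that the matched pairs of consecutive columns are never joined by a rail edge and the matching is induced; the upper bound $\nu_{ur}(L_k)\le k$ follows by arguing that any matching of more than $k$ edges covers enough vertices in two consecutive columns to force a short $M$-alternating $6$- or $8$-cycle through their rungs, so by the Golumbic--Hirst--Lewenstein alternating-cycle criterion it fails to be uniquely restricted. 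For $\mathcal{B}_1$ the separating modifications only affect the two boundary columns, so the six non-isomorphic types are dispatched by a bounded end-analysis. For $\mathcal{B}_2$ the same column bound applies once one closes the ladder; here the parity condition is exactly what lets the alternating side-assignment $u,v,u,v,\dots$ close up properly around the cyclic sequence of columns (for even $k$ with the straight edges $u_1u_k,v_1v_k$, and for odd $k$ with the crossing edges $u_1v_k,v_1u_k$, so that the wrap-around never joins two same-side matched pairs), giving $\nu_s\ge k$, while the wrong parity would create a global alternating cycle that raises $\nu_{ur}$ above $\nu_s$.

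For the converse I would argue through the contrapositive, driven by a list of \emph{gainful} local configurations, each accompanied by a claim of the form: if $G$ contains configuration $X$, then $\nu_{ur}(G)>\nu_s(G)$. Starting from a maximum induced matching $M$, the guiding idea is that $M$ is forced to leave slack whenever the neighbourhood of a matched edge is not tight, and that such slack can be converted into one extra edge of a still uniquely restricted matching. The simplest gainful configuration is a sufficiently long induced path running through degree-$2$ vertices: on it one packs a path-matching of strictly larger size, which is automatically uniquely restricted because it spans a tree. The remaining configurations are small subcubic patterns around a degree-$3$ vertex where a local exchange augments $M$; in each case I would exhibit the exchange explicitly and invoke the alternating-cycle criterion to certify that the resulting matching remains uniquely restricted. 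The outcome of this stage is that a graph with $\nu_s(G)=\nu_{ur}(G)$ must avoid all configurations on the list, which pins down its local structure: every degree-$2$ vertex is a rung $u_i\!-\!w_i\!-\!v_i$ whose two neighbours lie on the rails, and no two degree-$2$ vertices are adjacent.

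The decisive and most technical step is the structural classification: proving that a $2$-connected subcubic graph of large order avoiding all gainful configurations must be a subdivided ladder, possibly capped or closed. Here I would use $2$-connectivity to exclude cut vertices and pendant attachments, the subcubic hypothesis to bound how rungs can link to one another, and the forbidden-configuration list to show that the rails are two vertex-disjoint paths joined only by rungs between corresponding positions. Tracing this local ladder structure around the whole graph yields a linear sequence of columns that either terminates in two ends---forcing the triangle/edge caps that realise $L_k\subseteq G\subseteq L_k'$ and hence $\mathcal{B}_1$---or closes up, forcing the prism/Möbius identifications of $\mathcal{B}_2$ with the parity dictated by the no-alternating-cycle requirement established in the first direction. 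The hypothesis that the order is at least $21$ enters precisely here: it guarantees the ladder is long enough for the propagation to be unambiguous and simultaneously discards the finitely many small sporadic graphs that satisfy the equality without being subdivided ladders (for instance $C_8$, which has $\nu_s=2<3=\nu_{ur}$ only because of its length, illustrates how short closed structures behave differently). I expect this classification---controlling every way the local structure could deviate from a single chain of rungs---to be the main obstacle and the longest part of the argument.
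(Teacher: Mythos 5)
Your overall architecture matches the paper's: a sufficiency lemma for $\mathcal{B}$ plus a converse driven by local exchange arguments (your ``gainful configurations'' are precisely the paper's Lemma~\ref{lemma0}: an uncovered edge not on a $4$-cycle through $M$, or a local pair admitting none of three completions, each yields a larger uniquely restricted matching). However, your sufficiency argument rests on a false key claim. You assert that any matching of more than $k$ edges in $L_k$ forces a \emph{short} ($6$- or $8$-) alternating cycle ``through two consecutive columns.'' This fails: in $L_5$ the matching $\{u_1u_2,\,u_3u_4,\,u_5w_5,\,v_1w_1,\,v_2v_3,\,v_4v_5\}$ has size $k+1=6$, and one checks directly that no alternating $6$- or $8$-cycle exists --- its shortest alternating cycle is the $12$-cycle $u_1u_2u_3u_4u_5w_5v_5v_4v_3v_2v_1w_1u_1$, and the analogous spiral construction gives shortest alternating cycles of length $2k+2$ for every $k$. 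This is why the paper proves $\nu_{ur}(L_k)\leq k$ by a minimal-counterexample induction that peels off the last column (Case 1 of Lemma~\ref{lemma_suff}), not by exhibiting short cycles. The same gap is worse for $\mathcal{B}_2$: there a matching of size $k+1$ using a wrap edge exists whose alternating obstructions are forced to be \emph{global}; the paper must delete the wrap edge in two ways, extract from each resulting $L_k$-matching a long alternating cycle $P'$, $P''$, prove the index inequality $i<j$ from the ladder structure, and splice the two cycles into a single $M$-alternating cycle. Your phrase ``the same column bound applies once one closes the ladder'' skips exactly this, which is the only genuinely delicate step of the sufficiency direction; your parity discussion only addresses the easy lower bound $\nu_s\geq k$.

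On the converse, your plan is the right one in outline but is not a proof: the structural classification you defer (``the main obstacle and the longest part'') is the substance of the paper --- ten cases analyzing the components of $G-V(M)$, triangles, $4$-cycles, and the degrees of matched vertices, with inductive ladder-propagation arguments in Cases 4.3.3, 6, 8 and 10, and with the order bound $21$ entering as explicit terminal estimates $n(G)\leq 20$ in the sporadic cases rather than as a generic ``propagation is unambiguous'' principle. Moreover, the local structure you predict the forbidden configurations will pin down is incorrect at the boundary: you claim every degree-$2$ vertex is a middle-of-rung vertex and that no two degree-$2$ vertices are adjacent, but $L_k$ itself lies in $\mathcal{B}_1$ (and is $2$-connected), and there $u_1$ and $w_1$ are adjacent degree-$2$ vertices; the end columns of uncapped ladders violate both of your claims, so any classification built on them would wrongly exclude legitimate members of $\mathcal{B}_1$.
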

The reason for the assumption that $G$ has order at least $21$ 
is that there are several small $2$-connected subcubic graphs $G$ 
that satisfy $\nu_s(G) = \nu_{ur}(G)$ 
but are not isomorphic to a graph in $\mathcal{B}$. 
While the proof of Theorem \ref{th_char}
relies on very simple observations
captured by Lemma \ref{lemma0},
it involves a rather detailed case analysis.
Theorem \ref{th_char} is a key ingredient of our second main result.

\begin{theorem} \label{t3}
Deciding whether a given subcubic graph $G$ 
satisfies $\nu_{ur}(G) = \nu_s(G)$ can be done in polynomial time.
\end{theorem}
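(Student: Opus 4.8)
The plan is to reduce the general subcubic case to the $2$-connected case handled by Theorem~\ref{th_char}, and then to argue that membership in the relevant class can be tested efficiently. The key observation is that both $\nu_s$ and $\nu_{ur}$ behave additively over the blocks of a graph in a controlled way: an induced matching (respectively, a uniquely restricted matching) of $G$ cannot contain an $M$-alternating cycle, and any such cycle lives inside a single block. More precisely, I would first establish that for any graph $G$ with blocks $B_1,\ldots,B_t$, we have $\nu_s(G)=\sum_i \nu_s(B_i)$ is \emph{not} quite true because cut vertices can be shared, but the inequalities $\nu_s(G)\le \sum_i \nu_s(B_i)$ and a matching lower bound can be controlled up to the behaviour at cut vertices. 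The cleaner route is to reduce to $2$-connected components directly: I would show that $\nu_s(G)=\nu_{ur}(G)$ forces each block of $G$ to satisfy the same equality (since a uniquely restricted matching witnessing strictness in one block, padded by maximum induced matchings in the others, would witness strictness in $G$), and conversely that equality in every block, together with a compatibility condition at the cut vertices, yields equality in $G$.

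Concretely, the plan is as follows. First I would handle the decomposition: compute the block-cut-tree of $G$ in linear time, so it suffices to analyze each block and the interaction across cut vertices. Second, for each $2$-connected block $B$, I would split into two cases according to its order. If $|V(B)|\ge 21$, Theorem~\ref{th_char} tells us that $\nu_s(B)=\nu_{ur}(B)$ holds if and only if $B$ is isomorphic to a graph in $\mathcal{B}$, and membership in $\mathcal{B}$ is testable in polynomial time: the graphs in $\mathcal{B}_1$ and $\mathcal{B}_2$ are ladder-like, so one simply checks that $B$ has the prescribed degree sequence and the prescribed ``band'' structure by locating the spine paths $u_1\ldots u_k$ and $v_1\ldots v_k$ and verifying the endpoint edges. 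If $|V(B)|<21$, then $B$ has bounded order and we can decide $\nu_s(B)=\nu_{ur}(B)$ by brute force (there are only finitely many such graphs, and both parameters are computable by exhaustive search over the constantly many vertex subsets). Thus for every block we can decide the equality $\nu_s(B)=\nu_{ur}(B)$ in polynomial time.

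Third, I would lift the per-block information to $G$. The crucial structural lemma to prove here is that $\nu_s(G)=\nu_{ur}(G)$ if and only if $\nu_s(B)=\nu_{ur}(B)$ for every block $B$ of $G$. The forward direction follows because a uniquely restricted matching of a single block with $\nu_{ur}(B)>\nu_s(B)$ can be extended, using maximum induced matchings of the remaining blocks that avoid the relevant cut vertex, into a uniquely restricted matching of $G$ that beats $\nu_s(G)$; here one uses that an $M$-alternating cycle in $G$ must be confined to one block, so gluing uniquely restricted matchings across cut vertices preserves the no-alternating-cycle property provided no cut vertex is covered on both sides. The reverse direction requires showing that when each block satisfies equality, a maximum uniquely restricted matching of $G$, restricted to each block, is (essentially) induced there, so the global witness is already induced.

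The main obstacle I anticipate is the bookkeeping at cut vertices: a cut vertex may be covered by a matching edge in one block and left exposed in another, and an induced or uniquely restricted matching of $G$ does not simply decompose as a disjoint union of such matchings of the blocks, because an edge of $M$ in one block can see, through a shared cut vertex, an edge in an adjacent block. Handling this requires a careful parity/covering argument showing that one may always assume each cut vertex is covered in at most one block, and that doing so costs nothing in either parameter. Once this ``clean gluing'' is in place, the equivalence $\nu_s(G)=\nu_{ur}(G)\iff \bigwedge_B \big(\nu_s(B)=\nu_{ur}(B)\big)$ follows, and combining it with the polynomial-time block test yields the theorem.
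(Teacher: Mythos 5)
Your proposed ``crucial structural lemma'' --- that $\nu_s(G)=\nu_{ur}(G)$ if and only if $\nu_s(B)=\nu_{ur}(B)$ for every block $B$ of $G$ --- is false, and the reverse direction fails already on trees. Consider the path $P_7$ on $7$ vertices: every block is a $K_2$, which trivially satisfies $\nu_s(K_2)=\nu_{ur}(K_2)=1$, yet $\nu_{ur}(P_7)=\nu(P_7)=3$ (a forest contains no cycle, hence no $M$-alternating cycle, so every matching is uniquely restricted) while $\nu_s(P_7)=2$. A $2$-connected counterexample to the spirit of your gluing argument is two $4$-cycles joined by a bridge: each block ($C_4$, $C_4$, $K_2$) satisfies the equality, but the whole graph has $\nu_s=2$ and $\nu_{ur}=3$. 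The underlying reason your clean-gluing hope cannot be repaired is an asymmetry between the two parameters: uniquely restricted matchings from different blocks always combine into a uniquely restricted matching of $G$ (no alternating cycle crosses a cut vertex), so $\nu_{ur}$ is essentially additive over blocks, whereas induced matchings from different blocks may see each other through cut vertices, so $\nu_s$ is genuinely subadditive. This gap is exactly what makes non-$2$-connected graphs with the equality rare --- indeed, Lemma \ref{lemma1} of the paper shows that a connected subcubic graph with minimum degree at least $2$ satisfying $\nu_s=\nu_{ur}$ must already be $2$-connected, so there is no block-tree bookkeeping to do; degree-$1$ vertices are the only obstruction to $2$-connectivity that can coexist with the equality.

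The paper's actual reduction is accordingly different and avoids block decomposition altogether: it repeatedly peels off a pendant edge $uv$ with $d(u)=1$, using two reduction lemmas (Lemma \ref{lemma2}: deleting $\{u,v\}$ preserves $\nu_{ur}$ up to the forced edge $uv$ and preserves the equality; Lemma \ref{lemma4}: deleting $N[v]$ does the same for $\nu_s$, \emph{assuming} the equality holds). After peeling, the residual components have no degree-$1$ vertices, so by Lemma \ref{lemma1} and Theorem \ref{th_char} they must lie in the explicitly checkable class ${\cal B}'$ whenever the equality holds. Two algorithms are then run: one computes $\nu_{ur}(G)$ exactly, the other computes an induced matching that is guaranteed maximum \emph{provided} $\nu_s(G)=\nu_{ur}(G)$, and the final answer is obtained by comparing the two sizes (a conditional-correctness trick that sidesteps computing $\nu_s(G)$ in general, which is NP-hard even for subcubic graphs). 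Your per-block membership test for ${\cal B}$ and the brute force on orders below $21$ are fine as far as they go, but without a valid global reduction they do not yield the theorem; the lifting step needs to be replaced by the pendant-edge peeling and the two-algorithm comparison, or some equally careful substitute.
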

In the next section, we first prove Theorem \ref{t3},
and in Section \ref{sec3}, we prove Theorem \ref{th_char}. 

\section{Deciding $\nu_s(G)=\nu_{ur}(G)$ for a given subcubic graph}\label{sec2}

Our first lemma collects properties of general graphs satisfying the considered equality.

\begin{lemma}\label{lemma0}
Let $G$ be a graph with $\nu_s(G)=\nu_{ur}(G)$, 
and let $M$ be some maximum induced matching in $G$.
\begin{enumerate}[(i)]
\item If $uv$ is an edge of $G-V(M)$, then 
there is a $4$-cycle $uvwxu$ with $wx \in M$.
\item If $u_1u_2$ and $v_1v_2$ are two disjoint edges 
such that $u_1v_1 \in M$, then
\begin{itemize}
\item either $u_2$ and $v_2$ are adjacent,
\item or $u_1$ is adjacent to $v_2$, and $v_1$ is adjacent to $u_2$,
\item or there is a $6$-cycle $u_1u_2uvv_2v_1u_1$, where $uv \in M$.
\end{itemize}
\end{enumerate}
\end{lemma}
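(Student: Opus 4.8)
The common engine I would use is that the hypotheses make $M$ \emph{simultaneously} a maximum induced and a maximum uniquely restricted matching. Indeed, an induced matching is uniquely restricted, and since $|M| = \nu_s(G) = \nu_{ur}(G)$, every matching of size $|M|+1$ fails to be uniquely restricted and hence, by the observation that uniquely restricted matchings are exactly those without alternating cycles, contains an alternating cycle. The second ingredient is a reformulation of ``$M$ is induced'': $G(M)$ is $1$-regular, so no edge of $G$ outside $M$ joins two vertices of $V(M)$. The plan is to exploit this repeatedly: a non-matching edge of an alternating cycle that meets a vertex of $V(M)$ must lead out of $V(M)$, and the only matched vertices outside $V(M)$ will be the newly added endpoints, which pins the cycle down.

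For (i), both ends of $uv$ lie outside $V(M)$, so $M' = M \cup \{uv\}$ is a matching with $|M'| = |M|+1$, and it therefore has an $M'$-alternating cycle $C$. Since $M$ itself has no alternating cycle, $C$ must use $uv$; as $u,v$ are covered only by $uv$, I would write $C = u\,v\,z_2\,z_3 \cdots z_{2\ell-1}\,u$ with $z_2z_3, z_4z_5, \dots \in M$. If $\ell \ge 3$ then $z_3,z_4 \in V(M)$ and $z_3z_4$ is a non-matching edge of $C$ inside $V(M)$, contradicting induced-ness. Hence $\ell = 2$ and $C = u\,v\,z_2\,z_3\,u$ is the desired $4$-cycle with $z_2z_3 \in M$.

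For (ii), I would first note that since $u_1,v_1 \in V(M)$ and $M$ is induced, the only $V(M)$-neighbour of $u_1$ (resp.\ $v_1$) is $v_1$ (resp.\ $u_1$); as disjointness gives $u_2 \ne v_1$ and $v_2 \ne u_1$, this forces $u_2,v_2 \notin V(M)$, and also $u_2 \ne v_2$. Thus $M'' = (M \setminus \{u_1v_1\}) \cup \{u_1u_2, v_1v_2\}$ is again a matching of size $|M|+1$, so it has an $M''$-alternating cycle $C$. A short argument shows $C$ must traverse both new edges: if $C$ avoided $u_1u_2$ it could not meet $u_1$ or $u_2$ (whose only $M''$-edge is $u_1u_2$), and tracing $C$ with the induced reformulation then either produces a forbidden $M$-alternating cycle or forces a non-matching edge at $v_1$ to reach the excluded vertex $u_2$, a contradiction.

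Finally I would trace $C$ from $u_1$. Its matching edge is $u_1u_2$, and its non-matching edge $u_1t$ satisfies $t \notin V(M)$ unless $t = v_1$; since the only matched vertices off $V(M)$ are $u_2,v_2$, this gives $t \in \{v_1,v_2\}$. If $t = v_2$, one more step from $v_1$ (again using induced-ness to stop $C$ re-entering $V(M)$) forces the non-matching edge at $v_1$ to reach $u_2$, closing $C$ as the $4$-cycle $u_1u_2v_1v_2u_1$ and giving $u_1v_2, v_1u_2 \in E(G)$. If $t = v_1$, then $C$ contains the path $u_2\,u_1\,v_1\,v_2$, and I would follow the non-matching edge leaving $v_2$: either it returns to $u_2$, giving $u_2v_2 \in E(G)$, or it reaches some $v \in V(M)$ whose partner $u$ must, by induced-ness, send its non-matching cycle edge back to $u_2$, closing $C$ as the $6$-cycle $u_1u_2uvv_2v_1u_1$ with $uv \in M$. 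The main obstacle is exactly this last bookkeeping: one must argue that induced-ness forbids $C$ from wandering through any further matched vertices, so that $C$ is forced into one of the three short configurations rather than a longer alternating cycle.
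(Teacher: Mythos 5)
Your proposal is correct and takes essentially the same approach as the paper: the paper's two-line proof simply asserts that if the stated configurations are absent, then $M\cup\{uv\}$ (for (i)), respectively $(M\cup\{u_1u_2,v_1v_2\})\setminus\{u_1v_1\}$ (for (ii)), is a uniquely restricted matching of size $|M|+1>\nu_{ur}(G)$, a contradiction. Your explicit trace of the forced alternating cycle, using the Golumbic--Hirst--Lewenstein characterization together with the $1$-regularity of $G(M)$, is precisely the (contrapositive) verification of that assertion which the authors leave implicit, and your bookkeeping correctly pins the cycle to the $4$- and $6$-cycle configurations.
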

\begin{proof}
(i) If the stated $4$-cycle does not exist, 
then $M\cup \{ uv\}$ is a uniquely restricted matching in $G$ 
with more than $|M|=\nu_s(G)=\nu_{ur}(G)$ edges, 
which is a contradiction.

(ii) If none of the three situations arises,
then $(M \cup \{u_1u_2,v_1v_2 \}) \setminus \{u_1v_1 \}$ 
is a uniquely restricted matching in $G$
with more than $|M|=\nu_s(G)=\nu_{ur}(G)$ edges, 
which is a contradiction.
\end{proof}
An immediate consequence of Lemma \ref{lemma0} is the following.

\begin{lemma}\label{lemma1}
If $G$ is a connected subcubic graph with $\nu_s(G)=\nu_{ur}(G)$ 
and minimum degree at least $2$, 
then $G$ is $2$-connected.
\end{lemma}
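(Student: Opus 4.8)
The plan is to argue by contradiction, so assume $G$ is connected, subcubic, has minimum degree at least $2$ and satisfies $\nu_s(G)=\nu_{ur}(G)$, yet is not $2$-connected. The first step is to extract a \emph{bridge}. Since $G$ is not $2$-connected it has a cut vertex $x$, and $\deg_G(x)\le 3$ while $G-x$ has at least two components, each containing a neighbour of $x$; as two components each needing at least one edge to $x$ already use up at least two of the at most three edges at $x$, some component $C$ is joined to $x$ by a single edge $xa$. Hence $xa$ is a bridge, and I write $D$ for the side of this bridge containing $x$. Because $\delta(G)\ge 2$, both endpoints have a further neighbour on their own side ($a$ inside $C$, $x$ inside $D$), and no edge other than $xa$ joins $C$ to $D$.

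The engine of the proof is the elementary fact that a bridge lies on no cycle. I fix a maximum induced matching $M$, so that Lemma \ref{lemma0} applies to $M$. The recurring point is that every ``closing'' configuration offered by Lemma \ref{lemma0} --- the fourth edge of the $4$-cycle in (i), and the extra adjacencies or the sixth edge of the $6$-cycle in (ii) --- would have to supply a \emph{second} edge running between $C$ and $D$, which cannot exist. I then split into four cases according to how $M$ meets $\{x,a\}$, and these cases are exhaustive.

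If neither $x$ nor $a$ is covered by $M$, then $xa$ is an edge of $G-V(M)$, and Lemma \ref{lemma0}(i) demands a $4$-cycle through $xa$, contradicting that $xa$ is a bridge. If $xa\in M$, I apply Lemma \ref{lemma0}(ii) with $u_1v_1=ax$, taking $u_2$ a neighbour of $a$ in $C$ and $v_2$ a neighbour of $x$ in $D$; each of the three alternatives then forces an edge or a sixth-edge joining $C$ and $D$, which is impossible. If $xa\notin M$ but $a$ is matched, say $ap\in M$ with $p\in C$, I apply Lemma \ref{lemma0}(ii) with $u_1v_1=ap$ and $u_2=x$, ruling out all three alternatives for the same reason (here I also invoke that $M$ is induced to see that $x$ remains uncovered, which is what forbids the relevant endpoint of the putative $6$-cycle from being $x$). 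Finally, if $a$ is uncovered while $x$ is matched, say $xy\in M$ with $y\in D$, I apply Lemma \ref{lemma0}(ii) with $u_1v_1=xy$ and $u_2=a$. In every case Lemma \ref{lemma0} is violated, giving the desired contradiction.

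The only genuinely structural step is the first one --- turning a cut vertex into a bridge --- and this is exactly where subcubicity enters: if the maximum degree were larger, a cut vertex could attach each side by two or more edges and no bridge need appear. The remaining work is purely local bookkeeping, and I expect the verification in each application of Lemma \ref{lemma0}(ii) to be the most error-prone part, since one must track which endpoints lie in $C$ and which in $D$. However, because $xa$ is the unique edge across the cut, each of the three alternatives collapses at once: the two ``extra adjacency'' options and the sixth edge of the $6$-cycle always demand a forbidden second edge between $C$ and $D$, so no detailed computation is needed.
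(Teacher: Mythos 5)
Your proposal is correct and follows essentially the same route as the paper: use subcubicity to convert the cut vertex into a bridge, then show via Lemma \ref{lemma0}(i) that the bridge cannot avoid $V(M)$, and via Lemma \ref{lemma0}(ii) (splitting on how $M$ meets the bridge's endpoints, with inducedness ruling out both endpoints being covered by distinct matching edges) that every alternative would place the bridge on a cycle of length at most $6$. Your four-case split matches the paper's case analysis, merely without the paper's symmetry reduction that merges your last two cases into one.
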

\begin{proof} 
Suppose, for a contradiction, that $G$ is not $2$-connected.
Since $G$ is subcubic, this implies that $G$ has a bridge $uv$.
Let $M$ be some maximum induced matching in $G$.
By Lemma \ref{lemma0}(i), every edge of $G-V(M)$ lies in a $4$-cycle,
that is, $uv$ is not an edge of $G-V(M)$.

If $u,v\in V(M)$, then $uv\in M$. 
Since $G$ has minimum degree at least $2$,
$u$ has a neighbor $u'$ distinct from $v$, and 
$v$ has a neighbor $v'$ distinct from $u$.
Since $uv$ is a bridge, the edges $uu'$ and $vv'$ are disjoint.
Now, Lemma \ref{lemma0}(ii) implies the contradiction
that $uv$ lies in a cycle of length at most $6$.
Hence, we may assume that $u\in V(G)\setminus V(M)$ and $v\in V(M)$.

Let $M$ contain the edge $vv'$.
Since $G$ has minimum degree at least $2$,
$v'$ has a neighbor $v''$ in $V(G)\setminus V(M)$.
Since $uv$ is a bridge, the edges $uv$ and $v'v''$ are disjoint.
Now, Lemma \ref{lemma0}(ii) implies the contradiction
that $uv$ lies in a cycle of length $4$ or $6$.
This completes the proof.
\end{proof}
Let ${\cal B}'$ be the set of all graphs $G$ such that 
\begin{itemize}
\item either $G$ has order at most $20$, and satisfies $\nu_s(G)=\nu_{ur}(G)$,
\item or $G$ has order at least $21$, and is isomorphic to a graph in ${\cal B}$.
\end{itemize}
In order to prove Theorem \ref{t3},
we consider two algorithms.

\medskip

\begin{algorithm}[H]
\LinesNumbered\SetAlgoLined
\KwIn{A subcubic graph $G$.}
\KwOut{Either a maximum uniquely restricted matching $M$ in $G$,
or the correct statement ``$\nu_s(G)\not=\nu_{ur}(G)$''.}
\Begin{
\LinesNumbered\SetAlgoLined
$M\leftarrow \emptyset$\;
$H\leftarrow G$\;
\While{$H$ has an edge $uv$ with $d_H(u)=1$\label{l3}}
{
$M\leftarrow M\cup \{ uv\}$\;
$H\leftarrow H-\{ u,v\}$\;
}\label{l6}
Let $H_1,\ldots,H_k$ be the connected components of $H$\; \label{l8}
\If{$H_i\not\in {\cal B}'$ for some $i\in [k]$\label{l9}}
{
\Return{``$\nu_s(G)\not=\nu_{ur}(G)$''}\;
{\bf break}\;
}\label{l11}
Let $M_i$ be a maximum uniquely restricted matching in $H_i$ for $i\in [k]$\label{l12}\;
\Return{$M\cup M_1\cup\cdots\cup M_k$}\label{l13}\;
}
\caption{{\sc Murm}}\label{alg1}
\end{algorithm}
The correctness of this algorithms relies on the following lemma.

\begin{lemma}\label{lemma2}
Let $G$ be a graph, 
and let $uv$ be an edge of $G$ with $d_G(u)=1$.

Let $G'=G-\{ u,v\}$, 
and let $M'$ be a maximum uniquely restricted matching in $G'$.
\begin{enumerate}[(i)]
\item $\{ uv\}\cup M'$ is a maximum uniquely restricted matching in $G$.
\item If $\nu_s(G)=\nu_{ur}(G)$, then 
$\nu_s(G')=\nu_{ur}(G')$.
\end{enumerate}
\end{lemma}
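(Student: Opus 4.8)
The plan is to work throughout with the observation of Golumbic, Hirst, and Lewenstein that a matching is uniquely restricted if and only if it has no alternating cycle, and to exploit repeatedly that the degree-$1$ vertex $u$ lies on no cycle of $G$. For part (i) I would separate the two assertions hidden in the word ``maximum'': first that $\{uv\}\cup M'$ is uniquely restricted, and then that its size $|M'|+1=\nu_{ur}(G')+1$ actually equals $\nu_{ur}(G)$.

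To see that $\{uv\}\cup M'$ is uniquely restricted in $G$, I would suppose it had an alternating cycle $C$. Every vertex of an alternating cycle is covered by the matching (each such vertex meets a matching edge along the cycle), and the only matching edge at $v$ is $uv$; since $u$ has degree $1$ it lies on no cycle, so $C$ avoids $u$ and hence also $v$. Then $C$ is contained in $G'$ and is $M'$-alternating there, contradicting that $M'$ is uniquely restricted. This already yields $\nu_{ur}(G)\ge |M'|+1=\nu_{ur}(G')+1$.

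For the matching reverse inequality $\nu_{ur}(G)\le\nu_{ur}(G')+1$, I would start from a maximum uniquely restricted matching $M$ of $G$, delete the at most one edge of $M$ incident with $v$, and call the result $M''$. Because the unique edge at $u$ is $uv$, both $u$ and $v$ are uncovered by $M''$, so $M''$ is a matching of $G'$ with $|M''|\ge |M|-1$. The step I expect to be the main obstacle is verifying that $M''$ is still uniquely restricted in $G'$: an $M''$-alternating cycle $C$ would avoid $u$ and $v$, and it would also avoid the former partner $w$ of $v$ (if one exists), since $w$ is uncovered by $M''$ and uncovered vertices cannot lie on an alternating cycle; because $M''$ and $M$ induce the same matching off $\{u,v,w\}$, such a $C$ would be an $M$-alternating cycle in $G$, a contradiction. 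Hence $\nu_{ur}(G')\ge |M|-1=\nu_{ur}(G)-1$, and combining the two inequalities gives $\nu_{ur}(G)=\nu_{ur}(G')+1=|M'|+1$, so $\{uv\}\cup M'$ is indeed maximum.

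For part (ii) I would combine (i) with an analogous but easier pruning for induced matchings. Taking a maximum induced matching $M$ of $G$ and deleting the at most one edge incident with $v$ produces a subset $M''\subseteq M$ supported on vertices of $G'$; a subset of an induced matching is automatically induced, and since $G'[V(M'')]=G[V(M'')]$ is $1$-regular it remains induced in $G'$, giving $\nu_s(G')\ge \nu_s(G)-1$. Feeding in the hypothesis $\nu_s(G)=\nu_{ur}(G)$ together with the identity $\nu_{ur}(G)=\nu_{ur}(G')+1$ from (i) yields $\nu_s(G')\ge \nu_{ur}(G')$, and the general inequality $\nu_s(G')\le\nu_{ur}(G')$ then forces equality. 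I expect the routine care here to lie only in the two pruning arguments, where the key recurring point is that deleting the single edge at $v$ leaves its former partner uncovered, which both keeps the matching uniquely restricted (part~(i)) and is harmless for the inducedness claim (part~(ii)).
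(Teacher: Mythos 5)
Your proposal is correct and follows essentially the same route as the paper: the key point that some maximum uniquely restricted matching of $G$ contains $uv$ (which you verify carefully via the alternating-cycle characterization, in both directions) gives $\nu_{ur}(G)=\nu_{ur}(G')+1$, and your part (ii) is exactly the paper's sandwich $\nu_s(G)\leq \nu_s(G')+1\leq \nu_{ur}(G')+1=\nu_{ur}(G)=\nu_s(G)$ rearranged. The only difference is that you spell out the pruning and exchange details (the cycle avoiding $u$, $v$, and the former partner of $v$) that the paper dismisses as a ``simple observation''.
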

\begin{proof} 
(i) follows immediately from the simple observation that some
maximum uniquely restricted matching in $G$ contains $uv$.
If $\nu_s(G)=\nu_{ur}(G)$, then
$$\nu_s(G)\leq \nu_s(G')+1\leq \nu_{ur}(G')+1
\stackrel{(i)}{=}\nu_{ur}(G)=\nu_s(G)$$
implies $\nu_s(G')=\nu_{ur}(G')$,
and (ii) follows.
\end{proof} 

\begin{lemma}\label{lemma3}
Algorithm \ref{alg1} {\sc Murm} works correctly,
and can be implemented to run in polynomial time.
\end{lemma}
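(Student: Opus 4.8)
The plan is to verify that the output of {\sc Murm} matches its specification in both branches, and then to bound its running time. Throughout I would rely on two elementary facts about the behaviour of $\nu_s$ and $\nu_{ur}$ under disjoint unions: if $H$ is the disjoint union of $H_1,\dots,H_k$, then $\nu_s(H)=\sum_i\nu_s(H_i)$ and $\nu_{ur}(H)=\sum_i\nu_{ur}(H_i)$, and moreover a union $M_1\cup\cdots\cup M_k$ of maximum uniquely restricted matchings of the $H_i$ is a maximum uniquely restricted matching of $H$. For $\nu_s$ this is clear, since no edge joins two components; for $\nu_{ur}$ it follows from the Golumbic--Hirst--Lewenstein characterisation, because every $M$-alternating cycle is contained in a single component, so $M$ is uniquely restricted in $H$ if and only if each $M\cap E(H_i)$ is uniquely restricted in $H_i$.

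Correctness of the peeling loop (lines \ref{l3}--\ref{l6}) is governed by Lemma \ref{lemma2}. Writing $H$ for the graph remaining after the loop and $M$ for the collected leaf edges, repeated application of Lemma \ref{lemma2}(i) shows that $M$ together with any maximum uniquely restricted matching of $H$ is a maximum uniquely restricted matching of $G$; combined with the disjoint-union fact above, this gives that the set $M\cup M_1\cup\cdots\cup M_k$ returned in line \ref{l13} is a maximum uniquely restricted matching of $G$, irrespective of whether $\nu_s(G)=\nu_{ur}(G)$ holds. It therefore remains to show that the statement returned when some $H_i\notin\mathcal{B}'$ (lines \ref{l9}--\ref{l11}) is correct, and for this I would prove the contrapositive: if $\nu_s(G)=\nu_{ur}(G)$, then every $H_i$ lies in $\mathcal{B}'$.

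So assume $\nu_s(G)=\nu_{ur}(G)$. Iterating Lemma \ref{lemma2}(ii) over the peeling steps yields $\nu_s(H)=\nu_{ur}(H)$, and the disjoint-union identities together with $\nu_s(H_i)\le\nu_{ur}(H_i)$ force $\nu_s(H_i)=\nu_{ur}(H_i)$ for every component. When the loop halts, $H$ has no vertex of degree $1$, so each component $H_i$ is either a single vertex $K_1$ (which has order $1\le 20$ and satisfies the equality, hence lies in $\mathcal{B}'$) or a connected subcubic graph of minimum degree at least $2$. In the latter case Lemma \ref{lemma1} makes $H_i$ $2$-connected; if $|V(H_i)|\le 20$ it lies in $\mathcal{B}'$ by definition, and if $|V(H_i)|\ge 21$ then Theorem \ref{th_char} applied to $\nu_s(H_i)=\nu_{ur}(H_i)$ puts $H_i$ in $\mathcal{B}\subseteq\mathcal{B}'$. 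Thus all components lie in $\mathcal{B}'$, which is exactly what the contrapositive requires.

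For the running time, the loop runs at most $|V(G)|/2$ times, and each iteration together with the component computation in line \ref{l8} is polynomial. The only steps needing care are the membership test $H_i\in\mathcal{B}'$ (line \ref{l9}) and the matching computation in line \ref{l12}. For components of order at most $20$ both tasks are handled by brute force in constant time. For a component of order at least $21$ one tests isomorphism to a graph in $\mathcal{B}$: the order of $H_i$ determines the parameter $k$ up to a constant-size range, leaving only a constant number of candidate graphs up to isomorphism, and since these candidates have maximum degree $3$ and a rigid ``ladder'' structure of bounded pathwidth, both the recognition and the maximum uniquely restricted matching computation reduce to a bounded-width dynamic program (equivalently, they are $\mathrm{MSO}$-definable optimisation problems on graphs of bounded treewidth), hence run in polynomial time. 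I expect this last point — certifying that $\mathcal{B}'$-membership and $\nu_{ur}$ of a $\mathcal{B}$-graph are computable in polynomial time — to be the main technical obstacle, as it is the only place where the explicit structure of $\mathcal{B}$ must be exploited; everything else follows formally from Lemmas \ref{lemma1} and \ref{lemma2} and Theorem \ref{th_char}.
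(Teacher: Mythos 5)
Your proposal is correct and takes essentially the same route as the paper: peel degree-one edges via Lemma \ref{lemma2}, deduce from Lemma \ref{lemma1} and Theorem \ref{th_char} that under $\nu_s(G)=\nu_{ur}(G)$ every component of the residual graph lies in ${\cal B}'$, and appeal to the explicit structure of ${\cal B}$ for the polynomial-time membership test and matching computation. You simply make explicit several points the paper leaves tacit --- the additivity of $\nu_s$ and $\nu_{ur}$ over components (via the alternating-cycle characterisation), the isolated-vertex components left after peeling, and a concrete bounded-pathwidth justification of the ${\cal B}'$ subroutines --- all of which are sound.
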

\begin{proof} 
By Lemma \ref{lemma2}(i),
the set $M$ constructed by the {\bf while}-loop
in lines \ref{l3} to \ref{l6} is a subset of some 
maximum uniquely restricted matching in $G$.

If $\nu_s(G)=\nu_{ur}(G)$,
then, by Lemma \ref{lemma2}(ii),
the graph $H$ in line \ref{l8} satisfies 
$\nu_s(H)=\nu_{ur}(H)$.
Since the graph $H$ in line \ref{l8} has no vertex of degree $1$,
Lemma \ref{lemma1} and Theorem \ref{th_char} imply
that every component of $H$ belongs to ${\cal B}'$.
Therefore, if some component of $H$ 
does not belong to ${\cal B}'$,
then {\sc Murm} correctly returns 
``$\nu_s(G)\not=\nu_{ur}(G)$''.
In view of the simple structure of the graphs in ${\cal B}$,
it can be decided in polynomial time 
whether a given graph belongs to ${\cal B}'$,
that is, the {\bf if}-statement in lines \ref{l9} to \ref{l11}
can be implemented to run in polynomial time.

Now, we may assume that 
every component of $H$ belongs to ${\cal B}'$.
By Lemma \ref{lemma2}(i),
the matching returned in line \ref{l13}
is a maximum uniquely restricted matching in $G$.
Furthermore, 
again in view of the simple structure of the graphs in ${\cal B}$,
a maximum uniquely restricted matching 
can be determined in polynomial time 
for every given graph in ${\cal B}'$,
that is, line \ref{l12} 
can be implemented to run in polynomial time.
\end{proof}

\begin{algorithm}[H]
\LinesNumbered\SetAlgoLined
\KwIn{A subcubic graph $G$.}
\KwOut{Either an induced matching $M$ in $G$,
or the correct statement ``$\nu_s(G)\not=\nu_{ur}(G)$''.}
\Begin{
\LinesNumbered\SetAlgoLined
$M\leftarrow \emptyset$\;
$H\leftarrow G$\;
\While{$H$ has an edge $uv$ with $d_H(u)=1$\label{l3b}}
{
$M\leftarrow M\cup \{ uv\}$\;
$H\leftarrow H-N_H[v]$\;
}\label{l6b}
Let $H_1,\ldots,H_k$ be the connected components of $H$\; \label{l8b}
\If{$H_i\not\in {\cal B}'$ for some $i\in [k]$\label{l9b}}
{
\Return{``$\nu_s(G)\not=\nu_{ur}(G)$''}\;
{\bf break}\;
}\label{l11b}
Let $M_i$ be a maximum induced matching in $H_i$ for $i\in [k]$\label{l12b}\;
\Return{$M\cup M_1\cup\cdots\cup M_k$}\label{l13b}\;
}
\caption{{\sc Msm}}\label{alg2}
\end{algorithm}

The correctness of this algorithms relies on the following lemma.

\begin{lemma}\label{lemma4}
Let $G$ be a graph, 
and let $uv$ be an edge of $G$ with $d_G(u)=1$.

Let $G''=G-N_G[v]$, 
and let $M''$ be an induced matching in $G''$.
\begin{enumerate}[(i)]
\item $\{ uv\}\cup M''$ is an induced matching in $G$.
\item If $\nu_s(G)=\nu_{ur}(G)$, then 
$\nu_s(G'')=\nu_s(G)-1=\nu_{ur}(G'')$.
\item If $\nu_s(G)=\nu_{ur}(G)$, and $M''$ 
is a maximum induced matching in $G''$,
then $\{ uv\}\cup M''$ is a maximum induced matching in $G$.
\end{enumerate}
\end{lemma}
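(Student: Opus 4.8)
The plan is to prove the three parts in the stated order, treating (i) as a direct structural check, (ii) as the real work, and (iii) as an immediate corollary of (i) and (ii). For (i), I would look at the induced subgraph $G[V(M'')\cup\{u,v\}]$. Since $d_G(u)=1$, the only neighbour of $u$ is $v$; and since every vertex of $M''$ lies in $V(G'')=V(G)\setminus N_G[v]$, no vertex of $M''$ is adjacent to $v$ (nor, of course, to $u$). Hence the only edge incident with $\{u,v\}$ in this induced subgraph is $uv$ itself, so $G[V(M'')\cup\{u,v\}]$ is the disjoint union of the single edge $uv$ and $G''[V(M'')]=G[V(M'')]$, which is $1$-regular because $M''$ is induced in $G''$. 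Thus $\{uv\}\cup M''$ is an induced matching in $G$. The same disjoint-union picture yields a slightly stronger observation that I would record for later use: if a matching $N''$ of $G''$ is uniquely restricted, then, since $N''$ is the unique perfect matching of $G''[V(N'')]$ and $\{uv\}$ is trivially the unique perfect matching of the edge $uv$, the set $\{uv\}\cup N''$ is the unique perfect matching of the corresponding disjoint union, hence uniquely restricted in $G$.

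For (ii), assume $\nu_s(G)=\nu_{ur}(G)$. The easy inequality $\nu_s(G'')\le\nu_s(G)-1$ follows from (i) applied to a maximum induced matching of $G''$. Combining this with the always-valid bound $\nu_s(G'')\le\nu_{ur}(G'')$, it suffices to establish the two inequalities $\nu_{ur}(G'')\le\nu_s(G)-1$ and $\nu_s(G'')\ge\nu_s(G)-1$; the chain $\nu_s(G)-1\le\nu_s(G'')\le\nu_{ur}(G'')\le\nu_s(G)-1$ then forces equality throughout. The first inequality is exactly the uniquely-restricted remark recorded in (i): extending a maximum uniquely restricted matching $N''$ of $G''$ by $uv$ gives a uniquely restricted matching of $G$, so $\nu_{ur}(G'')+1\le\nu_{ur}(G)=\nu_s(G)$.

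The crux, and the step I expect to be the main obstacle, is the bound $\nu_s(G'')\ge\nu_s(G)-1$. I would prove it by showing that $G$ has a maximum induced matching $M$ with $uv\in M$: deleting $uv$ from such an $M$ leaves an induced matching of size $\nu_s(G)-1$, and all of its vertices avoid $N_G[v]$ because the only neighbour of $v$ in the $1$-regular graph $G[V(M)]$ is $u$, so what remains is an induced matching of $G''$. To produce such an $M$, I start from an arbitrary maximum induced matching $M_0$ and invoke Lemma \ref{lemma0}(i), which applies since $\nu_s(G)=\nu_{ur}(G)$. If $v\notin V(M_0)$ then also $u\notin V(M_0)$ (the unique edge at $u$ is $uv$), so $uv$ would be an edge of $G-V(M_0)$, and Lemma \ref{lemma0}(i) would force a $4$-cycle $uvwxu$ through $u$ with $u$ adjacent to $x\neq v$, contradicting $d_G(u)=1$. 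Hence $v\in V(M_0)$ for every maximum induced matching. If $v$ is matched to $u$ we are done; otherwise $v$ is matched to some $v'\neq u$, and I replace $vv'$ by $uv$. A short verification shows that $(M_0\setminus\{vv'\})\cup\{uv\}$ is again an induced matching of the same size: deleting $v'$ changes the degree of no vertex of $V(M_0)$ other than $v$ (in $G[V(M_0)]$ each vertex meets only its $M_0$-partner), and adding $u$, whose sole neighbour is $v$, restores the degree of $v$ to one while affecting nothing else.

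Finally, (iii) is immediate. By (i) the set $\{uv\}\cup M''$ is an induced matching in $G$ of size $\nu_s(G'')+1$, and by (ii) this equals $\nu_s(G)$, so $\{uv\}\cup M''$ is a maximum induced matching in $G$.
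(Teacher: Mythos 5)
Your proof is correct and follows essentially the same route as the paper: both establish the chain $\nu_s(G)\leq \nu_s(G'')+1\leq \nu_{ur}(G'')+1\leq \nu_{ur}(G)=\nu_s(G)$ and read off (ii), with (iii) immediate from (i) and (ii). The only (harmless) difference is that for $\nu_s(G'')\geq \nu_s(G)-1$ you do slightly more work than needed, exchanging $vv'$ for $uv$ to get a maximum induced matching containing $uv$, whereas the paper simply notes that for the $M$-edge $e$ incident with $v$, the matching $M\setminus\{e\}$ already avoids $N_G[v]$ by $1$-regularity of $G[V(M)]$ and is thus an induced matching of $G''$.
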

\begin{proof} 
(i) is trivial; note that none of the involved matchings is supposed to be maximum.
Now, let $\nu_s(G)=\nu_{ur}(G)$,
and let $M$ be a maximum induced matching in $G$.
If $M$ contains no edge incident with $v$, 
then adding $uv$ to $M$ results in a larger 
uniquely restricted matching in $G$, 
which contradicts $\nu_s(G)=\nu_{ur}(G)$.
Hence, $M$ contains an edge $e$ incident with $v$.
Since $M\setminus \{ e\}$ is an induced matching in $G''$, 
and adding $uv$ to a uniquely restricted matching in $G''$
yields a uniquely restricted matching in $G$,
we obtain 
$$\nu_s(G)\leq \nu_s(G'')+1\leq \nu_{ur}(G'')+1
\leq \nu_{ur}(G)=\nu_s(G)$$
implies $\nu_s(G'')=\nu_{ur}(G'')$,
and (ii) follows.
(iii) follows immediately from (i) and (ii).
\end{proof} 

\begin{lemma}\label{lemma5}
Algorithm \ref{alg2} {\sc Msm} works correctly,
and can be implemented to run in polynomial time.
Furthermore, 
if the input graph $G$ satisfies $\nu_s(G)=\nu_{ur}(G)$,
then {\sc Msm} returns a maximum induced matching in $G$.
\end{lemma}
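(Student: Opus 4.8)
The plan is to follow the same scheme as the proof of Lemma \ref{lemma3}, replacing the role of Lemma \ref{lemma2} by Lemma \ref{lemma4} and paying extra attention to the additional maximality claim. First I would observe that the edges added to $M$ in the \textbf{while}-loop (lines \ref{l3b} to \ref{l6b}) always form an induced matching, regardless of whether the equality holds: in each iteration the entire closed neighborhood $N_H[v]$ is deleted, so the edge $uv$ is vertex-disjoint from, and non-adjacent to, every edge remaining in $H$. Applying Lemma \ref{lemma4}(i) in the reverse order of the iterations then shows that $M$ is an induced matching in $G$, and, more generally, that $M$ together with any induced matching of the final graph $H$ in line \ref{l8b} is again an induced matching in $G$. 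In particular, whenever {\sc Msm} returns a set of edges in line \ref{l13b}, this set is an induced matching in $G$.

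Next I would treat the case $\nu_s(G)=\nu_{ur}(G)$. Here Lemma \ref{lemma4}(ii) guarantees that each deletion step preserves the equality, so every intermediate graph, and in particular the graph $H$ reached in line \ref{l8b}, satisfies $\nu_s(H)=\nu_{ur}(H)$. Since both parameters are additive over connected components and $\nu_s(H_i)\le \nu_{ur}(H_i)$ for every $i$, the equality $\nu_s(H_i)=\nu_{ur}(H_i)$ must hold for each component $H_i$. As the \textbf{while}-loop terminates only when $H$ has no vertex of degree $1$, every component carrying an edge has minimum degree at least $2$ and, by Lemma \ref{lemma1}, is $2$-connected; Theorem \ref{th_char} (for order at least $21$) or the definition of ${\cal B}'$ (for order at most $20$) then places it in ${\cal B}'$, while each trivial component $K_1$ arising from deleting closed neighborhoods lies in ${\cal B}'$ as well. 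Hence the \textbf{if}-statement in lines \ref{l9b} to \ref{l11b} cannot fire when the equality holds, so {\sc Msm} never issues the statement ``$\nu_s(G)\ne\nu_{ur}(G)$'' incorrectly; equivalently, whenever the statement is issued it is correct. Combined with the previous paragraph, this establishes correctness.

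For the maximality claim I would argue, still assuming $\nu_s(G)=\nu_{ur}(G)$, that the matchings $M_i$ chosen in line \ref{l12b} are maximum induced matchings of the $H_i$, so by additivity $M_1\cup\cdots\cup M_k$ is a maximum induced matching of $H$. Because each intermediate graph of the loop inherits the equality $\nu_s=\nu_{ur}$ by Lemma \ref{lemma4}(ii), I can feed this matching back through the loop and apply Lemma \ref{lemma4}(iii) in the reverse order of the iterations, upgrading it step by step to a maximum induced matching of $G$; thus the set returned in line \ref{l13b} is a maximum induced matching in $G$. Finally, the running time is polynomial for the same reasons as in Lemma \ref{lemma3}: the \textbf{while}-loop executes at most $|V(G)|$ times, and, in view of the simple structure of the graphs in ${\cal B}$, both testing membership in ${\cal B}'$ (line \ref{l9b}) and computing a maximum induced matching of a graph in ${\cal B}'$ (line \ref{l12b}) can be done in polynomial time.

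I expect the main obstacle to be the maximality argument rather than the correctness argument. Correctness is essentially a transcription of Lemma \ref{lemma3}, but the ``Furthermore'' part forces one to propagate optimality from the final components back through the loop in exactly the right order, invoking Lemma \ref{lemma4}(iii) at each step and relying on the equality being preserved throughout; one must also be careful to combine the additivity of $\nu_s$ and $\nu_{ur}$ over components with the isolated-vertex components $K_1$ that the deletions $H\leftarrow H-N_H[v]$ may create.
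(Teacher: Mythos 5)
Your proposal is correct and follows essentially the same route as the paper's proof: Lemma \ref{lemma4}(i) for the returned set being an induced matching, Lemma \ref{lemma4}(ii) with Lemma \ref{lemma1} and Theorem \ref{th_char} to justify the membership test in ${\cal B}'$, and Lemma \ref{lemma4}(ii)--(iii) propagated through the \textbf{while}-loop for the maximality claim. Your explicit treatment of the isolated $K_1$ components and the component-wise additivity is a slightly more careful rendering of a step the paper leaves tacit, not a different argument.
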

\begin{proof} 
By Lemma \ref{lemma4}(i),
if {\sc Msm} returns a matching $M$ in line \ref{l13b},
then $M$ is an induced matching in $G$.
If $\nu_s(G)=\nu_{ur}(G)$,
then, by Lemma \ref{lemma4}(ii),
the graph $H$ in line \ref{l8b} satisfies 
$\nu_s(H)=\nu_{ur}(H)$.
Since the graph $H$ in line \ref{l8b} has no vertex of degree $1$,
Lemma \ref{lemma1} and Theorem \ref{th_char} imply
that every component of $H$ belongs to ${\cal B}'$.
Therefore, if some component of $H$ 
does not belong to ${\cal B}'$,
then {\sc Msm} correctly returns 
``$\nu_s(G)\not=\nu_{ur}(G)$''.

Now, let $\nu_s(G)=\nu_{ur}(G)$.
By Lemma \ref{lemma4}(ii) and (iii),
the set $M$ constructed by the {\bf while}-loop
in lines \ref{l3b} to \ref{l6b} is a subset of some 
maximum induced matching in $G$,
which implies that the matching returned in line \ref{l13b}
is a maximum induced matching in $G$.

The statement about the running time follows 
similarly as in the proof of Lemma \ref{lemma3}.
\end{proof} 
It is now easy to complete the following.

\begin{proof} [Proof of Theorem \ref{t3}]
Let $G$ be a given subcubic graph.
We execute 
Algorithm \ref{alg1} {\sc Murm} 
and 
Algorithm \ref{alg2} {\sc Msm} 
on $G$.
If one of the two algorithms returns
the statement ``$\nu_s(G)\not=\nu_{ur}(G)$'',
then this is correct 
by Lemma \ref{lemma3} and Lemma \ref{lemma5}.
Hence, we may assume that 
Algorithm \ref{alg1} {\sc Murm} 
returns a maximum uniquely restricted matching $M_{ur}$ in $G$,
and 
Algorithm \ref{alg2} {\sc Msm} 
returns an induced matching $M_s$ in $G$.

If $\nu_s(G)=\nu_{ur}(G)$,
then, by Lemma \ref{lemma5},
$M_s$ is a maximum induced matching in $G$,
and, hence, $|M_{ur}|=|M_s|$.
Conversely, if $|M_{ur}|=|M_s|$,
then, by Lemma \ref{lemma3} and $\nu_s(G)\leq \nu_{ur}(G)$,
$M_s$ is a maximum induced matching in $G$, and 
$\nu_s(G)=\nu_{ur}(G)$.
Altogether, it follows that 
$\nu_s(G)=\nu_{ur}(G)$ holds if and only if 
$|M_{ur}|=|M_s|$.
\end{proof} 

\section{The $2$-connected subcubic graphs $G$ with 
$\nu_s(G)=\nu_{ur}(G)$}\label{sec3}

The following lemma captures the sufficiency part of Theorem \ref{th_char}.
\begin{lemma} \label{lemma_suff}
If $G\in\mathcal{B}$,
then $\nu_s(G) = \nu_{ur}(G)$.
\end{lemma}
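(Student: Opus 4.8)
The plan is to show, for each family of graphs in $\mathcal{B}$, that $\nu_s(G)=\nu_{ur}(G)$ by first computing (or upper-bounding) $\nu_{ur}(G)$ and then exhibiting an induced matching of that same size. Since we always have $\nu_s(G)\le\nu_{ur}(G)$, it suffices to produce an induced matching $M$ together with an argument that no uniquely restricted matching can exceed $|M|$. The natural parameter is $k$ (the ``length'' of the ladder-like spine), and I expect the answer to be roughly $\nu_s(G)=\nu_{ur}(G)=k$ for the graphs built on $L_k$, with small corrections depending on the membership in $\mathcal{B}_1$ versus $\mathcal{B}_2$ and on parity.

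I would split into the two subfamilies. For $G\in\mathcal{B}_1$, recall $L_k\subseteq G\subseteq L_k'$, so $G$ consists of the two spine paths $u_1\cdots u_k$ and $v_1\cdots v_k$, the pendant-like vertices $w_1,\dots,w_k$ each joined to $u_i$ and $v_i$, plus possibly the extra apex vertices $w_1',w_k'$ and the rung/closing edges at the two ends. The key structural observation is that the ``middle'' edges $w_iu_i$ (or $w_iv_i$) form a large induced matching: choosing one edge at each index $i$ incident to $w_i$, spaced so that consecutive chosen edges are non-adjacent, yields an induced matching whose size grows linearly in $k$. To get the matching \emph{uniquely restricted} side, I would argue that any $M$-alternating cycle would have to use the spine, and the vertices $w_i$ have degree $2$ with both neighbors already saturated, which blocks alternating cycles; equivalently I would directly bound $\nu_{ur}(G)$ from above using that every uniquely restricted matching avoids $M$-alternating cycles, and the cyclic structure of $G$ (a near-ladder) forces the bound to match the induced matching we exhibited.

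For $G\in\mathcal{B}_2$, the graph is obtained from $L_k$ with $k\ge 3$ by adding two ``crossing'' or ``parallel'' closing edges depending on the parity of $k$; the point of the parity-dependent choice is precisely that it either creates or avoids an $M$-alternating cycle through the spine, so that the optimal induced matching and the optimal uniquely restricted matching have the same size in both parities. Here I would again write down an explicit induced matching $M$ (using the $w_i$-rungs) of size depending on $k$, verify it is induced by checking the degree-$2$ vertices $w_i$, and then check that the two added edges, under the stated parity rule, do not allow the creation of an $M$-alternating cycle that would let us enlarge a uniquely restricted matching beyond $|M|$.

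The main obstacle I anticipate is the upper bound $\nu_{ur}(G)\le|M|$ rather than the construction of the induced matching itself, which is routine. For this I would lean on Golumbic--Hirst--Lewenstein's characterization (quoted in the introduction) that $M$ is uniquely restricted iff $G$ has no $M$-alternating cycle: I would take an arbitrary uniquely restricted matching $M^{*}$ and argue, using the cycle structure of the near-ladder and the degree constraints from subcubicity, that $|M^{*}|$ cannot exceed the value attained by the explicit induced matching. Because $\mathcal{B}_1$ contains six non-isomorphic graphs per $k$ and $\mathcal{B}_2$ splits by parity, the cleanest write-up would prove a single counting bound $\nu_{ur}(G)\le k$ (or the appropriate value) uniformly and then note the induced matching of matching size exists in each case, rather than treating all cases fully separately.
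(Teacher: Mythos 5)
Your overall strategy coincides with the paper's: exhibit an induced matching of size $k$ built from the rung edges (the paper uses $\{u_{2i}w_{2i}\}\cup\{v_{2i-1}w_{2i-1}\}$, essentially your construction), and then prove the upper bound $\nu_{ur}(G)\le k$ via the no-alternating-cycle characterization. But the entire content of the lemma lies in that upper bound, and your proposal contains no argument for it; you flag it yourself as ``the main obstacle'' and then only gesture at ``the cyclic structure of the near-ladder'' and a hoped-for ``single counting bound''. No simple uniform counting bound is available: $L_k$ has ordinary matchings of size roughly $3k/2$, and already in $L_2$ the matching $\{w_1u_1,w_2u_2,v_1v_2\}$ of size $3$ is ruled out only by locating a concrete alternating $6$-cycle. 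The paper instead proves $\nu_{ur}(G)\le k$ by a minimum-order counterexample argument that peels off end rungs: for $G\simeq L_k$ it analyzes how a hypothetical uniquely restricted matching $M$ with $|M|>k$ meets $\{u_k,v_k,w_k\}$, in one subcase performing the exchange $(M\setminus\{u_ku_{k-1},v_kw_k\})\cup\{u_{k-1}w_{k-1}\}$ to reduce to $L_{k-1}$; for ${\cal B}_1$ it first re-routes $M$ off the end gadgets and reduces to the $L_k$ case. None of this inductive machinery, or a substitute for it, appears in your sketch.

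The ${\cal B}_2$ case is where a naive execution of your plan would fail. There a maximum uniquely restricted matching may genuinely use the added closing edges, so you cannot simply delete them and invoke the ladder bound, nor does the parity rule by itself ``avoid creating an $M$-alternating cycle'': the relevant alternating cycles are with respect to an arbitrary uniquely restricted matching $M$, not the fixed induced matching you construct. The paper first pins down, via minimality and further exchange arguments, that $M$ contains one closing edge together with specific spine edges (e.g.\ $u_1u_2,v_{k-1}v_k\in M$ in the odd case), then forms two modified matchings $M'$ and $M''$ that live in $G$ minus the closing edges, a graph isomorphic to $L_k$; since each has more than $k$ edges, the already-proved $L_k$ case forces an $M'$-alternating cycle and an $M''$-alternating cycle, the ladder structure forces these to run through rungs $w_j$ and $w_i$ with $i<j$, and concatenating the two cycles yields an $M$-alternating cycle in $G$, contradicting unique restrictedness. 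This two-cycle gluing argument, together with the index comparison $i<j$ that makes the concatenation alternating, is the missing idea in your proposal.
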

\begin{proof}
Let $G\in \mathcal{B}$ 
be such that $n(G)\in \{ 3k,3k+1,3k+2\}$, that is, 
$G$ arises from $L_k$ by adding at most two vertices 
and some edges.
Since 
$$\left\{u_{2i}w_{2i} : i \in \left\lbrack \left\lfloor \frac{k}{2} \right\rfloor\right \rbrack \right\} \cup
\left\{v_{2i-1}w_{2i-1} : i \in \left\lbrack \left\lceil \frac{k}{2} \right\rceil \right\rbrack \right\}$$
is an induced the matching in $G$, we have $\nu_s(G)\geq k$.
In order to complete the proof,
it suffices to show that $\nu_{ur}(G)\leq k$.
Therefore, we suppose, for a contradiction, 
that $G$ is such that $\nu_{ur}(G)>k$, 
and that the order $n(G)$ of $G$ is as small as possible.
It is easy to verify that $k\geq 3$.
Let $M$ be a maximum uniquely restricted matching in $G$.

We consider different cases.

\medskip

\noindent {\bf Case 1.} {\it $G \simeq L_k$.}

\medskip

\noindent If $M$ contains at most one edge incident with $u_k$ or $v_k$, 
then the graph $G'=G-\{ u_k,v_k,w_k\}$ has a uniquely restricted matching 
$M'=M\cap E(G')$ of size more than $k-1$.
Since $G' \simeq L_{k-1}$, we obtain a contradiction to the choice of $G$.
Hence, by symmetry, we may assume that $u_{k}u_{k-1}\in M$,
and that either $v_kv_{k-1}\in M$ or $v_kw_k\in M$.
If $v_kv_{k-1}\in M$,
then the graph $G''=G-\{ u_{k-1},v_{k-1},w_{k-1},u_k,v_k,w_k\}$ 
has a uniquely restricted matching 
$M''=M\cap E(G'')$ of size more than $k-2$.
Since $G'' \simeq L_{k-2}$, we obtain a contradiction to the choice of $G$.
Hence, $v_kw_k\in M$.
Since $M$ is uniquely restricted, we obtain $v_{k-1}w_{k-1}\not\in M$,
and $M'=(M\setminus \{ u_ku_{k-1},v_kw_k\})\cup \{ u_{k-1}w_{k-1}\}$
is a uniquely restricted matching of $G'=G-\{ u_k,v_k,w_k\}$.
Since $M'$ has size more than $k-1$, and $G' \simeq L_{k-1}$, 
we obtain a contradiction to the choice of $G$.

\medskip

\noindent {\bf Case 2.} {\it $G\in {\cal B}_1$.}

\medskip

\noindent If $M$ intersects $\{ u_1w_1',v_1w_1',u_1v_1\}$,
then adding either $u_1w_1$ or $v_1w_1$ to 
$M\setminus \{ u_1w_1',v_1w_1',u_1v_1\}$
yields a uniquely restricted matching in $G$
that 
does not intersect $\{ u_1w_1',v_1w_1',u_1v_1\}$, and
has the same size as $M$.
By symmetry, we may assume that
$M$ does not contain any edge in $E(L_k')\setminus E(L_k)$,
that is, $M$ is a uniquely restricted matching of size more than $k$
of the induced subgraph $L_k$ of $G$, and
we obtain a contradiction to the choice of $G$.

\medskip

\noindent {\bf Case 3.} {\it $G\in {\cal B}_2$ and $k$ is odd.}

\medskip

\noindent $G$ arises from $L_k$ by adding the edges $v_1u_k$ and $u_1v_k$.
In view of Case 1, we may assume, by symmetry, that $v_1u_k \in M$.
Since $G - \{v_1,w_1,u_1, v_k,w_k,u_k\} \simeq L_{k-2}$, 
it follows that 
every maximum uniquely restricted matching in $G$,
and, hence, also $M$, 
contains at least three edges incident with 
a vertex in $\{v_1,u_1, v_k,u_k\}$.
This implies $u_1v_k \not\in M$. 
If $u_1w_1 \in M$, then $v_{k-1}v_k \in M$ and $u_{k-1}w_{k-1}\not\in M$,
which implies that the matching 
$(M\cup\{v_{k-1}w_{k-1}\})\setminus \{v_{k-1}v_k \}$ 
is uniquely restricted in $G$, a contradiction.
Hence, by symmetry, we may assume that $u_1u_2, v_{k-1}v_k\in M$.

Let $M^\prime = (M \cup \{u_kw_k\}) \setminus \{v_1u_k\}$, and 
$M^{\prime\prime} = (M \cup \{v_1w_1\}) \setminus \{v_1u_k\}$. 

Since $M'$ and $M''$ are matchings in $G-\{v_1u_k,u_1v_k\} \simeq L_k$ of size more than $k$, 
Case 1 implies that 
both matchings $M^\prime$ and $M^{\prime\prime}$ 
are not uniquely restricted, that is, 
there is an $M^\prime$-alternating cycle $v_kP^\prime u_kw_kv_k$, 
and an $M^{\prime\prime}$-alternating cycle $u_1P^{\prime\prime}v_1w_1u_1$,
where $P^\prime$ and $P^{\prime\prime}$ are suitable paths.
In view of the structure of $G$,
we obtain that 
$$P^\prime=v_kv_{k-1}\ldots v_jw_ju_ju_{j+1}\ldots u_{k-1}u_k$$
and
$$P^{\prime\prime}=u_1u_2\ldots u_iw_iv_iv_{i-1}\ldots v_2v_1$$
for suitable incides $i$ and $j$ with $1<i,j<k$.
Furthermore, the structure of $G$ implies $i<j$,
which implies the contradiction that the cycle 
$u_1P^{\prime\prime}v_1u_kP^\prime v_ku_1$ is $M$-alternating.

\medskip

\noindent {\bf Case 4.} {\it $G\in {\cal B}_2$ and $k$ is even.}

\medskip

\noindent $G$ arises from $L_k$ by adding the edges $u_1u_k$ and $v_1v_k$.
Arguing similarly as in Case 3, 
we may assume that $u_1u_2,u_{k-1}u_k,v_1v_k\in M$.
Let $M^\prime = (M \cup \{v_kw_k\}) \setminus \{v_1v_k\}$, and 
$M^{\prime\prime} = (M \cup \{v_1w_1\}) \setminus \{v_1v_k\}$. 

Since $M'$ and $M''$ are matchings in $G-\{v_1v_k,u_1u_k\} \simeq L_k$ of size more than $k$, 
Case 1 implies that 
both matchings $M^\prime$ and $M^{\prime\prime}$ 
are not uniquely restricted, that is, 
there is an $M^\prime$-alternating cycle $u_kP^\prime v_kw_ku_k$ 
and an $M^{\prime\prime}$-alternating cycle $u_1P^{\prime\prime}v_1w_1u_1$,
where $P^\prime$ and $P^{\prime\prime}$ are suitable paths.
In view of the structure of $G$,
we obtain that 
$$P^\prime=u_ku_{k-1}\ldots u_jw_jv_jv_{j+1}\ldots v_{k-1}v_k$$
and
$$P^{\prime\prime}=u_1u_2\ldots u_iw_iv_iv_{i-1}\ldots v_2v_1$$
for suitable incides $i$ and $j$ with $1<i,j<k$.
Again, the structure of $G$ implies $i < j$, 
which implies the contradiction that the cycle 
$u_1P^{\prime\prime}v_1v_kP^\prime u_ku_1$ is $M$-alternating.
\end{proof}

\begin{proof}[Proof of Theorem \ref{th_char}]
Let $G$ be a $2$-connected subcubic graph of order at least $21$.
If $G$ is isomorphic to a graph in $\mathcal{B}$,
then Lemma \ref{lemma_suff} implies $\nu_s(G) = \nu_{ur}(G)$.
In order to complete the proof,
we assume $\nu_s(G) = \nu_{ur}(G)$, 
and deduce that 
$G$ is isomorphic to a graph in $\mathcal{B}$.
Let $M$ be a maximum induced matching in $G$.
By Lemma \ref{lemma0}(i), $G-V(M)$ has maximum degree at most $2$.
A pair of disjoint edges $u_1u_2$ and $v_1v_2$ with $u_1v_1 \in M$ is called a {\it local pair}.

We consider several cases and subcases.
Within each (sub)case, we will --- sometimes tacitly --- assume 
that the local configurations considered in the previous (sub)cases
are no longer possible.
In each (sub)case, we conclude that 
\begin{itemize}
\item either Lemma \ref{lemma0} fails, which is a contradiction,
\item or $n(G)\leq 20$, which is a contradiction,
\item or $G\in {\cal B}$ as desired.
\end{itemize}

\noindent {\bf Case 1.} {\it Some component $H$ of $G-V(M)$ has order at least $5$.}

\medskip

\noindent Since $H$ has maximum degree at most $2$, 
$H$ contains a path $u_1u_2u_3u_4u_5$. 
By Lemma \ref{lemma0}(i) for the edge $u_1u_2$, 
there is a $4$-cycle $u_1u_2v_2v_1u_1$ with $v_1v_2 \in M$.
By Lemma \ref{lemma0}(i) for the edge $u_2u_3$,
$u_3$ is adjacent to $v_1$.
By Lemma \ref{lemma0}(i) for the edge $u_3u_4$,
$u_4$ is adjacent to $v_2$.
Now, since $G$ is subcubic,
Lemma \ref{lemma0}(i) fails for the edge $u_4u_5$.
See Figure \ref{figcase1} for an illustration.

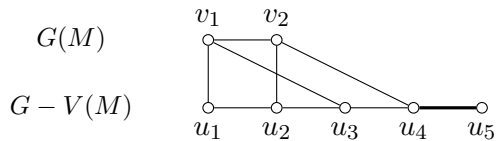
\begin{figure}[H]
\centering\tiny
\begin{tikzpicture}[scale = 0.9] 
	    \node[label=below:\normalsize $u_1$] (u1) at (0,0) {};
	    \node[label=below:\normalsize $u_2$] (u2) at (1,0) {};
	    \node[label=below:\normalsize $u_3$] (u3) at (2,0) {};
	    \node[label=below:\normalsize $u_4$] (u4) at (3,0) {};
	    \node[label=below:\normalsize $u_5$] (u5) at (4,0) {};
	    \node[label=above:\normalsize $v_1$] (v1) at (0,1) {};
	    \node[label=above:\normalsize $v_2$] (v2) at (1,1) {};
	    
	    \foreach \from/\to in {u1/u2, u2/u3, u3/u4, u1/v1, u3/v1, u2/v2, u4/v2, v1/v2}
	    \draw [-] (\from) -- (\to);
	    
	    \draw[-,very thick] (u4) -- (u5);
	    
	    \pgftext[x=-2cm,y=0cm] {\normalsize $G-V(M)$};
	    \pgftext[x=-2cm,y=1cm] {\normalsize $G(M)$};
	    
\end{tikzpicture}
\caption{The final situation in Case 1.} \label{figcase1}
\end{figure}

\noindent {\bf Case 2.} {\it Some component $H$ of $G-V(M)$ has order $4$.}

\medskip

\noindent Similarly as in Case 1, 
$H$ contains a path $u_1u_2u_3u_4$,
and $M$ contains an edge $v_1v_2$ such that 
$v_1$ is adjacent to $u_1$ and $u_3$, and
$v_2$ is adjacent to $u_2$ and $u_4$.
By Lemma \ref{lemma0}(ii) 
for the local pair $v_1u_1$ and $v_2u_4$,
there is an edge $v_3v_4\in M$ distinct from $v_1v_2$
such that 
$u_1$ is adjacent to $v_3$, 
and
$u_4$ is adjacent to $v_4$.
Since $G$ is $2$-connected, and $n(G)>8$, 
$v_4$ has a neighbor $x$ distinct from $v_3$ and $u_4$.
Now, Lemma \ref{lemma0}(ii) fails for the local pair $v_3u_1$ and $v_4x$.
See Figure \ref{figcase2} for an illustration.

\begin{figure}[H]
\centering\tiny
\begin{tikzpicture}[scale = 0.9] 
	    \node[label=below:\normalsize $u_1$] (u1) at (0,0) {};
	    \node[label=below:\normalsize $u_2$] (u2) at (1,0) {};
	    \node[label=below:\normalsize $u_3$] (u3) at (2,0) {};
	    \node[label=below:\normalsize $u_4$] (u4) at (3,0) {};
	    \node[label=below:\normalsize $x$] (x) at (5,0) {};
	    \node[label=above:\normalsize $v_1$] (v1) at (0,1) {};
	    \node[label=above:\normalsize $v_2$] (v2) at (1,1) {};
	    \node[label=above:\normalsize $v_3$] (v3) at (3,1) {};
	    \node[label=above:\normalsize $v_4$] (v4) at (4,1) {};
	    
	    \foreach \from/\to in {u1/u2, u2/u3, u3/u4, u1/v1, u3/v1, u2/v2, u4/v2, v1/v2, v3/v4, v4/u4}
	    \draw [-] (\from) -- (\to);
	    
	    \draw[-,very thick] (v3) -- (u1);
	    \draw[-,very thick] (v4) -- (x);
	    
	    \pgftext[x=-2cm,y=0cm] {\normalsize $G-V(M)$};
	    \pgftext[x=-2cm,y=1cm] {\normalsize $G(M)$};
	    
\end{tikzpicture}
\caption{The final situation in Case 2.} \label{figcase2}
\end{figure}
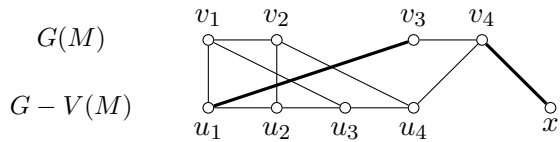

\noindent {\bf Case 3.} {\it Some component $H$ of $G-V(M)$ has order $3$.}

\medskip

\noindent Similarly as in Cases 1 and 2, 
$H$ contains a path $u_1u_2u_3$,
and $M$ contains an edge $v_1v_2$ such that 
$v_1$ is adjacent to $u_1$ and $u_3$, and
$v_2$ is adjacent to $u_2$.
Since $G$ is $2$-connected, and $n(G)>5$, 
$v_2$ is not adjacent to $u_1$ or $u_3$,
and $u_3$ has a neighbor $v_3$ distinct from $u_2$ and $v_1$.
By Case 2, $v_3\in V(M)$, and $M$ contains an edge $v_3v_4$.
Since $G$ is $2$-connected, 
$v_4$ has a neighbor $x$ distinct from $v_3$.
If $v_4$ is adjacent to $u_1$, 
then Lemma \ref{lemma0}(ii) fails for the local pair $v_3u_3$ and $v_4u_1$,
that is, $x\not= u_1$.
By Lemma \ref{lemma0}(ii) for the local pair $v_3u_3$ and $v_4x$,
$x$ is adjacent to $v_2$.
If $v_4$ has a neighbor $y$ distinct from $v_3$ and $x$,
then Lemma \ref{lemma0}(ii) fails for the local pair $v_3u_3$ and $v_4y$.
Hence, $v_4$ has degree $2$ in $G$.
Since $G$ is $2$-connected, and $n(G)>8$,
$v_3$ is not adjacent to $u_1$.
By Case 2, and Lemma \ref{lemma0}(ii) for the local pair $v_1u_1$ and $v_2x$,
there is a $6$-cycle $v_1u_1v_5v_6xv_2v_1$, 
where $v_5v_6 \in M$ is distinct from $v_3v_4$.
By symmetry between $v_4$ and $v_6$,
$v_6$ has degree $2$ in $G$.
Since $G$ is $2$-connected, and $n(G)>10$, 
we may assume, by symmetry between $v_3$ and $v_5$,
that $v_3$ has a neighbor $y$ distinct from $v_4$ and $u_3$.
By Lemma \ref{lemma0}(ii) for the local pair $v_3y$ and $v_4x$,
$v_5$ is adjacent to $y$.
Since $G$ is $2$-connected, $n(G)=11$.
See Figure \ref{figcase3} for an illustration.

 \begin{figure}[H]
\centering\tiny
\begin{tikzpicture}[scale = 0.9] 
	    \node[label=below:\normalsize $u_1$] (u1) at (0,0) {};
	    \node[label=below:\normalsize $u_2$] (u2) at (1,0) {};
	    \node[label=below:\normalsize $u_3$] (u3) at (2,0) {};
	    \node[label=below:\normalsize $x$] (x) at (4,0) {};
	    \node[label=below:\normalsize $y$] (y) at (6,0) {};
	    \node[label=above:\normalsize $v_1$] (v1) at (0,1) {};
	    \node[label=above:\normalsize $v_2$] (v2) at (1,1) {};
	    \node[label=above:\normalsize $v_3$] (v3) at (3,1) {};
	    \node[label=above:\normalsize $v_4$] (v4) at (4,1) {};
	    \node[label=above:\normalsize $v_5$] (v5) at (6,1) {};
	    \node[label=above:\normalsize $v_6$] (v6) at (7,1) {};
	    
	    \foreach \from/\to in {u1/u2, u2/u3, u1/v1, u3/v1, u2/v2, v2/x, x/v4, x/v6, y/v3, y/v5, v1/v2, v3/u3, v3/v4, v5/v6, v5/u1}
	    \draw [-] (\from) -- (\to);
	    
	    \pgftext[x=-2cm,y=0cm] {\normalsize $G-V(M)$};
	    \pgftext[x=-2cm,y=1cm] {\normalsize $G(M)$};
	    
\end{tikzpicture}
\caption{The final situation in Case 3.} \label{figcase3}
\end{figure}
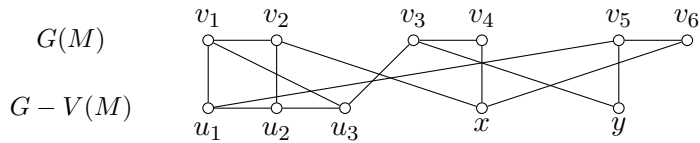

\noindent {\bf Case 4.} {\it $G-V(M)$ contains an edge $u_1u_2$.}

\medskip

\noindent By Lemma \ref{lemma0}(i) for the edge $u_1u_2$, 
there is a $4$-cycle $u_1u_2v_2v_1u_1$ with $v_1v_2 \in M$.
If $v_1$ is adjacent to $u_2$, then, 
since $G$ is $2$-connected, and $n(G)>4$,
$M$ contains an edge $v_3v_4$ distinct from $v_1v_2$
such that $u_1$ is adjacent to $v_3$.
Since $G$ is $2$-connected,
$v_4$ has a neighbor $x$ distinct from $v_3$.
By Lemma \ref{lemma0}(ii) 
for the local pair $v_3u_1$ and $v_4x$,
$x$ is adjacent to $v_2$.
Now, Lemma \ref{lemma0}(ii) fails
for the local pair $v_1u_2$ and $v_2x$.
Hence, by symmetry, $v_1$ is not adjacent to $u_2$,
and $v_2$ is not adjacent to $u_1$.

\medskip

\noindent {\bf Case 4.1.} 
{\it $v_2$ has a neighbor $u_3$ that belongs to an edge $u_3u_4$ of $G-V(M)$ distinct from $u_1u_2$, and $u_4$ is adjacent to $v_1$.}

\medskip

\noindent By Lemma \ref{lemma0}(ii) 
for the local pair $v_1u_1$ and $v_2u_3$,
there is a $6$-cycle $v_1u_1v_3v_4u_3v_2v_1$
with $v_3v_4 \in M$. 
By Lemma \ref{lemma0}(ii) 
for the local pair $v_1u_4$ and $v_2u_2$,
there is a $6$-cycle 
$v_1u_4v_5v_6u_2v_2v_1$ 
with $v_5v_6 \in M$.
Since $n(G)>8$, 
the edges $v_3v_4$ and $v_5v_6$ are distinct.
Lemma \ref{lemma0}(ii) 
implies that the vertices 
$v_3$, 
$v_4$, 
$v_5$, and 
$v_6$ have degree $2$ in $G$,
which implies $n(G) = 10$.
See Figure \ref{figcase4.1} for an illustration.

 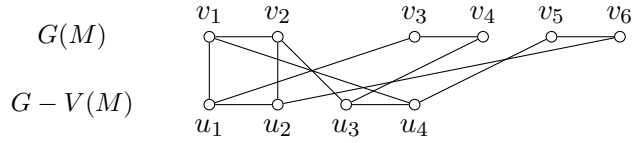
\begin{figure}[H]
\centering\tiny
\begin{tikzpicture}[scale = 0.9] 
	    \node[label=below:\normalsize $u_1$] (u1) at (0,0) {};
	    \node[label=below:\normalsize $u_2$] (u2) at (1,0) {};
	    \node[label=below:\normalsize $u_3$] (u3) at (2,0) {};
	    \node[label=below:\normalsize $u_4$] (u4) at (3,0) {};
	    \node[label=above:\normalsize $v_1$] (v1) at (0,1) {};
	    \node[label=above:\normalsize $v_2$] (v2) at (1,1) {};
	    \node[label=above:\normalsize $v_3$] (v3) at (3,1) {};
	    \node[label=above:\normalsize $v_4$] (v4) at (4,1) {};
	    \node[label=above:\normalsize $v_5$] (v5) at (5,1) {};
	    \node[label=above:\normalsize $v_6$] (v6) at (6,1) {};
	    
	    \foreach \from/\to in {v1/v2, v3/v4, v5/v6, u1/u2, u3/u4, u1/v1, u1/v3, u2/v2, u2/v6, u3/v2, u3/v4, u4/v1, u4/v5}
	    \draw [-] (\from) -- (\to);
	    
	    \pgftext[x=-2cm,y=0cm] {\normalsize $G-V(M)$};
	    \pgftext[x=-2cm,y=1cm] {\normalsize $G(M)$};
	    
\end{tikzpicture}
\caption{The final situation in Case 4.1.} \label{figcase4.1}
\end{figure}

\noindent {\bf Case 4.2.} 
{\it $v_2$ has a neighbor $u_3$ that belongs to an edge $u_3u_4$ of $G-V(M)$ distinct from $u_1u_2$, 
and $u_4$ is not adjacent to $v_1$.}

\medskip

\noindent By Lemma \ref{lemma0}(i) for the edge $u_3u_4$, 
there is a $4$-cycle $u_3u_4v_4v_3u_3$, 
where $v_3v_4 \in M$ is distinct from $v_1v_2$. 
By Lemma \ref{lemma0}(ii) 
for the local pair $v_1u_1$ and $v_2u_3$,
$u_1$ is adjacent to $v_4$.
If $u_4$ is adjacent to $v_3$, 
then Lemma \ref{lemma0}(ii)
fails for the local pair $v_3u_4$ and $v_4u_1$.
Hence, $u_4$ is not adjacent to $v_3$.
If $u_2$ is adjacent to $v_3$, 
then Lemma \ref{lemma0}(ii)
fails for the local pair $v_3u_2$ and $v_4u_4$.
Hence, $u_2$ is not adjacent to $v_3$.

If $u_2$ has a neighbor $v_5$ distinct from $u_1$ and $v_2$,
then there is an edge $v_5v_6\in M$ 
distinct from $v_1v_2$ and $v_3v_4$.
If $v_6$ is adjacent to $u_4$, 
then Lemma \ref{lemma0}(ii) fails
for the local pair $v_5u_2$ and $v_6u_4$.
Hence, $v_6$ is not adjacent to $u_4$.
Since $G$ is $2$-connected, 
$v_6$ has a neighbor $x$ distinct from $v_5$.
By Lemma \ref{lemma0}(ii) 
for the local pair $v_5u_2$ and $v_6x$,
$x$ is adjacent to $v_1$.
Now, Lemma \ref{lemma0}(ii) fails
for the local pair $v_1x$ and $v_2u_3$.
See the left part of Figure \ref{figcase4.2}.
Hence, by symmetry between $u_2$ and $u_4$,
$u_2$ and $u_4$ have degree $2$ in $G$.

Since $G$ is $2$-connected, and $n(G)>8$,
$v_1$ has a neighbor $x$ distinct from $u_1$ and $v_2$.
Now, Lemma \ref{lemma0}(ii) fails
for the local pair $v_1x$ and $v_2u_3$.
See the right part of Figure \ref{figcase4.2}.

\begin{figure}[H]
\begin{minipage}{0.49\textwidth}
\centering\tiny
\begin{tikzpicture}[scale = 0.9] 
	    \node[label=below:\normalsize $u_1$] (u1) at (0,0) {};
	    \node[label=below:\normalsize $u_2$] (u2) at (1,0) {};
	    \node[label=below:\normalsize $u_3$] (u3) at (2,0) {};
	    \node[label=below:\normalsize $u_4$] (u4) at (3,0) {};
	    \node[label=above:\normalsize $v_1$] (v1) at (0,1) {};
	    \node[label=above:\normalsize $v_2$] (v2) at (1,1) {};
	    \node[label=above:\normalsize $v_3$] (v3) at (2,1) {};
	    \node[label=above:\normalsize $v_4$] (v4) at (3,1) {};
	    \node[label=above:\normalsize $v_5$] (v5) at (-1,1) {};
	    \node[label=above:\normalsize $v_6$] (v6) at (-2,1) {};
	    \node[label=below:\normalsize $x$] (x) at (-2,0) {};
	    
	    \foreach \from/\to in {v1/v2, v3/v4, v5/v6, u1/u2, u3/u4, u1/v1, u1/v4, u2/v2, u2/v5, u3/v3, u4/v4, x/v6}
	    \draw [-] (\from) -- (\to);
	    
	    \draw[-,very thick] (v1) -- (x);
	    \draw[-,very thick] (v2) -- (u3);
	    
	    \pgftext[x=-4cm,y=0cm] {\normalsize $G-V(M)$};
	    \pgftext[x=-4cm,y=1cm] {\normalsize $G(M)$};
	    
\end{tikzpicture}
\end{minipage}
\begin{minipage}{0.49\textwidth}
\centering\tiny
\begin{tikzpicture}[scale = 0.9] 
	    \node[label=below:\normalsize $u_1$] (u1) at (0,0) {};
	    \node[label=below:\normalsize $u_2$] (u2) at (1,0) {};
	    \node[label=below:\normalsize $u_3$] (u3) at (2,0) {};
	    \node[label=below:\normalsize $u_4$] (u4) at (3,0) {};
	    \node[label=above:\normalsize $v_1$] (v1) at (0,1) {};
	    \node[label=above:\normalsize $v_2$] (v2) at (1,1) {};
	    \node[label=above:\normalsize $v_3$] (v3) at (2,1) {};
	    \node[label=above:\normalsize $v_4$] (v4) at (3,1) {};
	    \node[label=below:\normalsize $x$] (x) at (-1,0) {};

	    \foreach \from/\to in {v1/v2, v3/v4, u1/u2, u3/u4, u1/v1, u2/v2, u3/v3, u4/v4,u1/v4}
	    \draw [-] (\from) -- (\to);
	    
	    \draw[-,very thick] (v1) -- (x);
	    \draw[-,very thick] (v2) -- (u3);
	    
\end{tikzpicture}
\end{minipage}
\caption{Two situations in Case 4.2.} \label{figcase4.2}
\end{figure}
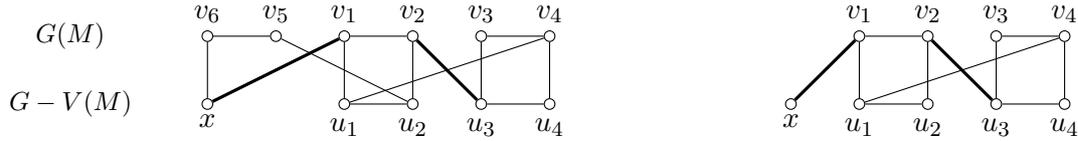

\noindent {\bf Case 4.3.} 
{\it $v_2$ has a neighbor $u_3$ 
that is an isolated vertex of $G-V(M)$.}

\medskip

\noindent By Lemma \ref{lemma0}(ii) 
for the local pair $v_1u_1$ and $v_2u_3$,
there is a $6$-cycle $v_1u_1v_3v_4u_3v_2v_1$
with $v_3v_4 \in M$. 

\medskip

\noindent {\bf Case 4.3.1.} 
{\it $v_4$ has a neighbor $x$ distinct from $v_3$ and $u_3$.}

\medskip

\noindent By Lemma \ref{lemma0}(ii) 
for the local pair $v_3u_1$ and $v_4x$,
$x=u_2$.
Since $G$ is $2$-connected, and $n(G)>7$,
$v_1$ is not adjacent to $u_3$.

If $v_1$ has a neighbor $y$ distinct from $u_1$ and $v_2$,
then, by Lemma \ref{lemma0}(ii) 
for the local pair $v_1y$ and $v_2u_2$,
$y$ is adjacent to $v_3$.
Since the matching 
$(M \cup \{ v_1y, v_4u_3, u_1u_2\}) 
\setminus \{v_1v_2,v_3v_4\}$
is not uniquely restricted,
there is an edge $v_5v_6\in M$ distinct from $v_1v_2$ and $v_3v_4$ such that $u_3$ is adjacent to $v_5$.
Since $G$ is $2$-connected,
$v_6$ has a neighbor $z$ distinct from $v_5$.
By Lemma \ref{lemma0}(ii) 
for the local pair $v_5u_3$ and $v_6z$,
$y=z$.
By Lemma \ref{lemma0}(ii),
the vertices $v_5$ and $v_6$ have degree $2$ in $G$,
which implies $n(G)=10$.
See the left part of Figure \ref{figcase4.3.1}.
Hence, $v_1$ has degree $2$ in $G$.

Since $n(G)>7$, and $G$ is $2$-connected,
there is an edge $v_5v_6\in M$ distinct from $v_1v_2$ and $v_3v_4$ such that $u_3$ is adjacent to $v_5$.
Since $G$ is $2$-connected,
$v_6$ has a neighbor $u_4$ distinct from $v_5$.
By Lemma \ref{lemma0}(ii) 
for the local pair $v_5u_3$ and $v_6u_4$,
$u_4$ is adjacent to $v_3$. 
Now, Lemma \ref{lemma0}(ii) fails
for the local pair $v_3u_4$ and $v_4u_2$.
See the right part of Figure \ref{figcase4.3.1}.

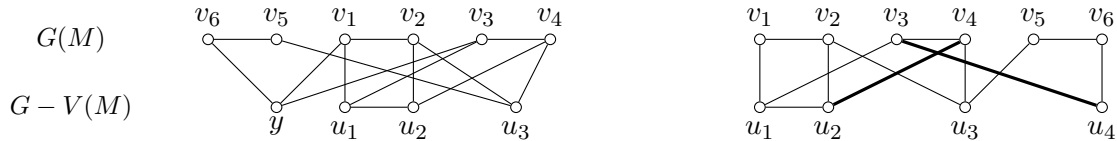
\begin{figure}[H]
\begin{minipage}{0.49\textwidth}
\centering\tiny
\begin{tikzpicture}[scale = 0.9] 
	    \node[label=below:\normalsize $u_1$] (u1) at (0,0) {};
	    \node[label=below:\normalsize $u_2$] (u2) at (1,0) {};
	    \node[label=below:\normalsize $u_3$] (u3) at (2.5,0) {};
	    \node[label=below:\normalsize $y$] (y) at (-1,0) {};
	    \node[label=above:\normalsize $v_1$] (v1) at (0,1) {};
	    \node[label=above:\normalsize $v_2$] (v2) at (1,1) {};
	    \node[label=above:\normalsize $v_3$] (v3) at (2,1) {};
	    \node[label=above:\normalsize $v_4$] (v4) at (3,1) {};
	    \node[label=above:\normalsize $v_5$] (v5) at (-1,1) {};
	    \node[label=above:\normalsize $v_6$] (v6) at (-2,1) {};
	    
	    \foreach \from/\to in {v1/v2, v3/v4, v5/v6, u1/u2, u1/v1, u1/v3, u2/v2, u2/v4, u3/v2, u3/v4, u3/v5, y/v1, y/v3, y/v6}
	    \draw [-] (\from) -- (\to);
	    
	    \pgftext[x=-4cm,y=0cm] {\normalsize $G-V(M)$};
	    \pgftext[x=-4cm,y=1cm] {\normalsize $G(M)$};
	    
\end{tikzpicture}
\end{minipage}
\begin{minipage}{0.49\textwidth}
\centering\tiny
\begin{tikzpicture}[scale = 0.9] 
	    \node[label=below:\normalsize $u_1$] (u1) at (0,0) {};
	    \node[label=below:\normalsize $u_2$] (u2) at (1,0) {};
	    \node[label=below:\normalsize $u_3$] (u3) at (3,0) {};
	    \node[label=below:\normalsize $u_4$] (u4) at (5,0) {};
	    \node[label=above:\normalsize $v_1$] (v1) at (0,1) {};
	    \node[label=above:\normalsize $v_2$] (v2) at (1,1) {};
	    \node[label=above:\normalsize $v_3$] (v3) at (2,1) {};
	    \node[label=above:\normalsize $v_4$] (v4) at (3,1) {};
	    \node[label=above:\normalsize $v_5$] (v5) at (4,1) {};
	    \node[label=above:\normalsize $v_6$] (v6) at (5,1) {};

	    \foreach \from/\to in {v1/v2, v3/v4, v5/v6, u1/u2, u1/v1, u1/v3, u2/v2, u3/v2, u3/v4, u3/v5, u4/v6}
	    \draw [-] (\from) -- (\to);
	    
	    \draw[-,very thick] (v3) -- (u4);
	    \draw[-,very thick] (v4) -- (u2);
	    
\end{tikzpicture}
\end{minipage}
\caption{Two situations in Case 4.3.1.} \label{figcase4.3.1}
\label{fig9}
\end{figure}

\medskip

\noindent {\bf Case 4.3.2.} 
{\it $v_4$ has degree $2$ in $G$,
and $v_1$ has a neighbor $x$ distinct from $u_1$ and $v_2$.}

\medskip

\noindent Since $v_1$ is not adjacent to $u_2$, $x\not=u_2$.
If $x = u_3$, then,
by Lemma \ref{lemma0}(ii) 
for the local pair $v_1u_3$ and $v_2u_2$,
$u_2$ is adjacent to $v_3$.
Since $G$ is $2$-connected, 
this implies $n(G)=7$.
Hence, $x$ is distinct from $u_2$ and $u_3$.

Since $\{ x,u_2,v_4\}$ is an independent set,
by Lemma \ref{lemma0}(ii) 
for the local pair $v_1x$ and $v_2u_2$,
there is a $6$-cycle $v_1xv_6v_5u_2v_2v_1$, 
where $v_5v_6 \in M$ is distinct from $v_1v_2$ and $v_3v_4$.
If $u_3$ is adjacent to $v_6$,
then Lemma \ref{lemma0}(ii) fails
for the local pair $v_6u_3$ and $v_5u_2$.
Hence, $u_3$ is not adjacent to $v_6$.
By Lemma \ref{lemma0}(ii) 
for the local pair $v_1x$ and $v_2u_3$,
\begin{itemize}
\item either $x$ is adjacent to $v_3$,
\item or $u_3$ is adjacent to $v_5$,
\item or there is a $6$-cycle $v_1xv_7v_8u_3v_2v_1$, 
where $v_7v_8 \in M$
is not in $\{v_1v_2,v_3v_4, v_5v_6 \}$. 
\end{itemize}
First, we assume that $x$ is adjacent to $v_3$.
If $v_6$ has a neighbor $y$ distinct from $v_5$ and $x$,
then Lemma \ref{lemma0}(ii) fails 
for the local pair $v_6y$ and $v_5u_2$.
Hence, $v_6$ has degree $2$ in $G$.
Since $v_4$ has degree $2$ in $G$,
$v_5$ has a neighbor $y$ distinct from $u_2$ and $v_6$.
Since $n(G)>10$, $y\not=u_3$,
and Lemma \ref{lemma0}(ii) fails
for the local pair $v_5y$ and $v_6x$.
See the left part of Figure \ref{figcase4.3.2}.

Next, we assume that $x$ is not adjacent to $v_3$,
but that $u_3$ is adjacent to $v_5$.
If $v_3$ has a neighbor $y$ distinct from $u_1$ and $v_4$,
then $y\not=x$, and, by Lemma \ref{lemma0}(ii) 
for the local pair $v_3y$ and $v_4u_3$,
$y$ is adjacent to $v_6$.
Now, Lemma \ref{lemma0}(ii) fails 
for the local pair $v_6y$ and $v_5u_2$.
Hence, $v_3$ has degree $2$ in $G$.
Since $v_4$ has degree $2$ in $G$,
$v_6$ has a neighbor $z$ distinct from $x$ and $v_5$.
Now, Lemma \ref{lemma0}(ii) fails 
for the local pair $v_6z$ and $v_5u_2$.

Finally, we assume that 
$x$ is not adjacent to $v_3$,
$u_3$ is not adjacent to $v_5$, but 
there is a $6$-cycle $v_1xv_7v_8u_3v_2v_1$, 
where $v_7v_8 \in M$
is distinct from $\{v_1v_2,v_3v_4, v_5v_6 \}$. 
Since $x$ is isolated in $G-V(M)$,
the vertices $x$ and $u_3$ are symmetric.
In view of the previous cases, 
and the symmetry between $v_4$ and $v_6$, 
we obtain that $v_6$ has degree $2$ in $G$.
If $v_5$ has a neighbor $y$ distinct from $u_2$ and $v_6$,
then, by Lemma \ref{lemma0}(ii) for 
the local pair $v_5y$ and $v_6x$,
$v_8$ is adjacent to $y$.
If $v_8$ has a neighbor $y'$ distinct from $u_3$ and $v_7$,
then, by Lemma \ref{lemma0}(ii) for 
the local pair $v_8y'$ and $v_7x$,
$v_5$ is adjacent to $y'$.
Hence, $v_5$ and $v_8$ 
either both have degree $2$
or degree $3$ and a common neighbor.
Similarly, $v_3$ and $v_7$ 
either both have degree $2$
or degree $3$ a common neighbor.
If $v_5$ and $v_8$ have a common neighbor $u$,
and $v_3$ and $v_7$ have a common neighbor $v$,
then, since $n(G) > 14$, $u$ and $v$ are not adjacent.
By Lemma \ref{lemma0}(ii) for 
the local pair $v_8u$ and $v_7v$,
there is a $6$-cycle $v_7vv_9v_{10}uv_8v_7$ 
with $v_9v_{10} \in M$. 
Since, by Lemma \ref{lemma0}(ii),
$v_9$ and $v_{10}$ have degree $2$ in $G$,
we obtain $n(G)=16$.
See the right part of Figure \ref{figcase4.3.2}.
Hence, by symmetry, 
we may assume that $v_5$ and $v_8$ 
have a common neighbor $u$,
and that $v_3$ and $v_7$ have degree $2$ in $G$.
Since $G$ is $2$-connected, 
we obtain $n(G)=13$.

\begin{figure}[H]
\begin{minipage}{0.49\textwidth}
\centering\tiny
\begin{tikzpicture}[scale = 0.9] 
	    \node[label=below:\normalsize $u_1$] (u1) at (0,0) {};
	    \node[label=below:\normalsize $u_2$] (u2) at (1,0) {};
	    \node[label=below:\normalsize $u_3$] (u3) at (2.5,0) {};
	    \node[label=below:\normalsize $x$] (x) at (-1.5,0) {};
	    \node[label=below:\normalsize $y$] (y) at (-2.5,0) {};
	    \node[label=above:\normalsize $v_1$] (v1) at (0,1) {};
	    \node[label=above:\normalsize $v_2$] (v2) at (1,1) {};
	    \node[label=above:\normalsize $v_3$] (v3) at (2,1) {};
	    \node[label=above:\normalsize $v_4$] (v4) at (3,1) {};
	    \node[label=above:\normalsize $v_5$] (v5) at (-1,1) {};
	    \node[label=above:\normalsize $v_6$] (v6) at (-2,1) {};
	    
	    \foreach \from/\to in {v1/v2, v3/v4, v5/v6, u1/u2, u1/v1, u1/v3, u2/v2, u2/v5, u3/v2, u3/v4, x/v1, x/v3}
	    \draw [-] (\from) -- (\to);
	    
	    \draw[-,very thick] (v6) -- (x);
	    \draw[-,very thick] (v5) -- (y);
	    
	    \pgftext[x=-4cm,y=0cm] {\normalsize $G-V(M)$};
	    \pgftext[x=-4cm,y=1cm] {\normalsize $G(M)$};
	    
\end{tikzpicture}
\end{minipage}
\begin{minipage}{0.49\textwidth}
\centering\tiny
\begin{tikzpicture}[scale = 0.9] 
	    \node[label=below:\normalsize $u_1$] (u1) at (0,0) {};
	    \node[label=below:\normalsize $u_2$] (u2) at (1,0) {};
	    \node[label=below:\normalsize $u_3$] (u3) at (2.5,0) {};
	    \node[label=below:\normalsize $x$] (x) at (-1.5,0) {};
	    \node[label=below:\normalsize $u$] (u) at (3.5,0) {};
	    \node[label=below:\normalsize $v$] (v) at (5,0) {};
	    \node[label=above:\normalsize $v_1$] (v1) at (0,1) {};
	    \node[label=above:\normalsize $v_2$] (v2) at (1,1) {};
	    \node[label=above:\normalsize $v_3$] (v3) at (2,1) {};
	    \node[label=above:\normalsize $v_4$] (v4) at (3,1) {};
	    \node[label=above:\normalsize $v_5$] (v5) at (-1,1) {};
	    \node[label=above:\normalsize $v_6$] (v6) at (-2,1) {};
	    \node[label=above:\normalsize $v_7$] (v7) at (4,1) {};
	    \node[label=above:\normalsize $v_8$] (v8) at (5,1) {};
	    \node[label=above:\normalsize $v_9$] (v9) at (6,1) {};
	    \node[label=above:\normalsize $v_{10}$] (v10) at (7,1) {};

	    \foreach \from/\to in {v1/v2, v3/v4, v5/v6, v7/v8, u1/u2, u1/v1, u1/v3, u2/v2, u2/v5, u3/v2, u3/v4, u3/v8, x/v6, x/v1, x/v7}
	    \draw [-] (\from) -- (\to);
	    
	    \foreach \from/\to in {u/v5, u/v8, u/v10, v/v3, v/v7, v/v9, v9/v10}
	    \draw [-, dashed] (\from) -- (\to);
	    
\end{tikzpicture}
\end{minipage}
\caption{Two situations in Case 4.3.2.} \label{figcase4.3.2}
\end{figure}
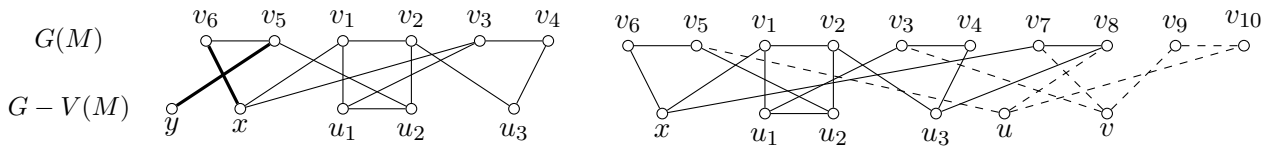

\noindent {\bf Case 4.3.3.} 
{\it $v_1$ and $v_4$ have degree $2$ in $G$.}

\medskip

\noindent If $u_2$ is adjacent to $v_3$,
then Lemma \ref{lemma0}(ii) fails
for the local pair $v_3u_2$ and $v_4u_3$.
Hence, $u_2$ is not adjacent to $v_3$.
If there is an edge $v_5v_6\in M$ distinct from $v_1v_2$ and $v_3v_4$, and $u_2$ is adjacent to $v_5$,
then $v_6$ has a neighbor $y$ distinct from $v_5$,
and Lemma \ref{lemma0}(ii) fails
for the local pair $v_6y$ and $v_5u_2$.
Hence, $u_2$ has degree $2$ in $G$.
Since $G$ is $2$-connected, and $n(G)>7$,
$v_3$ has a neighbor $u_4$ distinct from $u_1$ and $v_4$.
By Lemma \ref{lemma0}(ii) 
for the local pair $v_3u_4$ and $v_4u_3$,
there is a $6$-cycle $v_3u_4v_6v_5u_3v_4v_3$,
where $v_5v_6 \in M$ is distinct from $v_1v_2$ and $v_3v_4$.
By Lemma \ref{lemma0}(ii),
$v_6$ has degree $2$ in $G$.
Hence, for the vertex $v_5$,
we are in a similar situation as for $v_3$,
and we set up an inductive argument.

Let $k\geq 6$ be the largest even integer such that,
for every $i\in  \left\lbrack \frac{k-2}{2}  \right\rbrack \setminus \lbrack 2 \rbrack$,
\begin{itemize}
\item the vertex $v_{2i-1}$ has the neighbors $u_{i}$, $u_{i+2}$, and $v_{2i}$ 
such that $u_{i+1}$ and $u_{i+2}$ are not adjacent,
\item and $v_{2i}$ has degree $2$ in $G$, and is adjacent to $v_{2i-1}$ and $u_{i+1}$.
\end{itemize}
Note that these two conditions are satisfied for $k=6$ by the previous discussion.

By Lemma \ref{lemma0}(ii) for the local pair 
$v_{k-3}u_{\frac{k}{2}+1}$ and $v_{k-2}u_{\frac{k}{2}}$,
there is a $6$-cycle
$v_{k-2} u_{\frac{k}{2}} v_{k-1} v_{k} u_{\frac{k}{2}+1} v_{k-3} v_{k-2}$
with $v_{k-1} v_{k}\in M$. 
By Lemma \ref{lemma0}(ii),
$v_k$ has degree $2$ in $G$,
that is,
$v_{k-1}$ and $u_{\frac{k}{2}+1}$ are the only vertices in $G$ with further neighbors.
By the choice of $k$, Lemma \ref{lemma0}(ii), 
and since $G$ is $2$-connected,
\begin{itemize}
\item either $v_{k-1}$ and $u_{\frac{k}{2}+1}$ both have degree $2$ in $G$,
\item or $v_{k-1}$ and $u_{\frac{k}{2}+1}$ are adjacent,
\item or $v_{k-1}$ and $u_{\frac{k}{2}+1}$
have a neighbor $u_{\frac{k}{2}+2}$.
\end{itemize}
See Figure \ref{figcase4.3.3} illustrating these options.
In the first two cases, 
it follows immediately 
that $G$ is isomorphic to a graph in ${\cal B}_1$.
In the final case, 
$u_{\frac{k}{2}+2}$ has degree $2$ in $G$, 
and also in this case 
it follows 
that $G$ is isomorphic to a graph in ${\cal B}_1$.

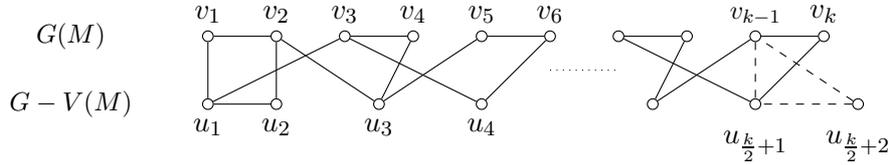
\begin{figure}[H]
\centering\tiny
\begin{tikzpicture}[scale = 0.9] 
	    \node[label=below:\normalsize $u_1$] (u1) at (0,0) {};
	    \node[label=below:\normalsize $u_2$] (u2) at (1,0) {};
	    \node[label=below:\normalsize $u_3$] (u3) at (2.5,0) {};
	    \node[label=below:\normalsize $u_4$] (u4) at (4,0) {};
	    \node[label=above:\normalsize $v_1$] (v1) at (0,1) {};
	    \node[label=above:\normalsize $v_2$] (v2) at (1,1) {};
	    \node[label=above:\normalsize $v_3$] (v3) at (2,1) {};
	    \node[label=above:\normalsize $v_4$] (v4) at (3,1) {};
	    \node[label=above:\normalsize $v_5$] (v5) at (4,1) {};
	    \node[label=above:\normalsize $v_6$] (v6) at (5,1) {};
	    
	    \foreach \from/\to in {v1/v2, v3/v4, v5/v6, u1/u2, u1/v1, u1/v3, u2/v2, u3/v2, u3/v4, u3/v5, u4/v3, u4/v6}
	    \draw [-] (\from) -- (\to);
	    
	    \draw[-, dotted] (5,0.5) -- (6,0.5);
	    
	    \node (u5) at (6.5,0) {};
	    \node[label=below:\normalsize $u_{\frac{k}{2}+1}$] (u6) at (8,0) {};
	    \node[label=below:\normalsize $u_{\frac{k}{2}+2}$] (u7) at (9.5,0) {};
	    
	    \node (w1) at (6,1) {};
	    \node (w2) at (7,1) {};
	    \node (w3) at (8,1) {};
	    \pgftext[x=8cm,y=1.3cm] {\normalsize $v_{k-1}$};
	    \node[label=above:\normalsize $v_k$] (w4) at (9,1) {};
	    
	    \pgftext[x=-2cm,y=0cm] {\normalsize $G-V(M)$};
	    \pgftext[x=-2cm,y=1cm] {\normalsize $G(M)$};
	    
	    \foreach \from/\to in {w1/w2, w3/w4, u5/w2, u5/w3, u6/w1, u6/w4}
	    \draw [-] (\from) -- (\to);
	    
	    \foreach \from/\to in {w3/u6, u7/u6, u7/w3}
	    \draw [-, dashed] (\from) -- (\to);
	    
\end{tikzpicture}
\caption{The final situation in Case 4.3.3.} \label{figcase4.3.3}
\end{figure}
In view of Cases 4.1-4.3,
we may assume that $v_1$ and $v_2$ have degree $2$ in $G$.
Since $n(G)>4$,
we may assume that there is an edge $v_3v_4\in M$
distinct from $v_1v_2$ such that $u_2$ is adjacent to $v_3$.
Since $G$ is $2$-connected, 
$v_4$ has a neighbor $y$ distinct from $v_3$.
By Lemma \ref{lemma0}(ii) 
for the local pair $v_3u_2$ and $v_4y$,
$y=u_1$.
By symmetry between $v_1v_2$ and $v_3v_4$,
we obtain that $v_3$ and $v_4$ both have degree $2$ in $G$,
which implies $n(G)=6$.

\medskip

\noindent In view of the previous cases, 
we may assume that $G-V(M)$ consists of isolated vertices.

\medskip

\noindent {\bf Case 5.} {\it 
There are two vertices $u_1$ and $u_2$ in $V(G)\setminus V(M)$, 
and an edge $v_1v_2 \in M$ 
such that 
$u_1$ and $u_2$ are both adjacent to $v_1$ and $v_2$.}

\medskip

\noindent  Since $G$ is $2$-connected, and $n(G) > 4$,
$M$ contains an edge $v_3v_4$ 
such that $u_1$ is adjacent to $v_3$.
Since $G$ is $2$-connected,
$v_4$ has a neighbor $x$ distinct from $v_3$.
By Lemma \ref{lemma0}(ii), $x=u_2$.
By Lemma \ref{lemma0}(ii), 
$v_3$ and $v_4$ both have degree $2$ in $G$,
which implies $n(G)=6$.

\medskip

\noindent {\bf Case 6.} {\it $G$ contains a triangle.}

\medskip

\noindent In view of the previous cases,
$G$ contains a triangle $v_1v_2u_1v_1$
with $v_1v_2 \in M$.
Since $G$ is $2$-connected, and $n(G)>3$,
we may assume, by symmetry between $v_1$ and $v_2$,
that $v_2$ has a neighbor $u_2$ distinct from $u_1$ and $v_1$.
By the previous cases, 
and Lemma \ref{lemma0}(ii)
for the local pair $v_1u_1$ and $v_2u_2$, 
there is a $6$-cycle $v_1u_1v_3v_4u_2v_2v_1$ 
with $v_3v_4 \in M$.

First, assume that $v_1$ has a neighbor $x$ 
distinct from $u_1$ and $v_2$. 
By Case 5, $x \neq u_2$.
By Lemma \ref{lemma0}(ii)
for the local pair $v_1x$ and $v_2u_1$,
$x$ is adjacent to $v_4$. 
By Lemma \ref{lemma0}(ii)
for the local pair $v_1x$ and $v_2u_2$,
\begin{itemize}
\item either $x$ is adjacent to $v_3$, 
\item or $u_2$ is adjacent to $v_3$, 
\item or there is a $6$-cycle $v_1xv_5v_6u_2v_2v_1$, 
where $v_5v_6 \in M$ is distinct from $v_3v_4$. 
\end{itemize}
In the first two cases, we obtain $n(G)=7$,
hence, the third case applies.
If $v_3$ has a neighbor $z$ 
distinct from $u_1$ and $v_4$, 
then, 
by Lemma \ref{lemma0}(ii)
for the local pair $v_3z$ and $v_4u_2$
and
for the local pair $v_3z$ and $v_4x$,
$z$ is adjacent to $v_5$ and $v_6$, 
which implies $n(G) = 10$.
See the left part of Figure \ref{figcase6}.
Hence, $v_3$ has degree $2$ in $G$.
Since $G$ is $2$-connected, and $n(G)>9$,
$v_5$ has a neighbor $z$ distinct from $x$ and $v_6$.
Now,
Lemma \ref{lemma0}(ii) fails 
for the local pair $v_5z$ and $v_6u_2$.
Hence, $v_1$ has degree $2$ in $G$.

If $v_4$ has a neighbor $x$ distinct from $u_2$ and $v_3$,
then Lemma \ref{lemma0}(ii) fails 
for the local pair $v_3u_1$ and $v_4x$.
Hence, $v_4$ has degree $2$ in $G$.
Since $G$ is $2$-connected, and $n(G)>6$,
repeating the above arguments,
we obtain that 
$M$ contains an edge $v_5v_6$ 
distinct from $v_1v_2$ and $v_3v_4$,
and $V(G)\setminus V(M)$ contains a vertex $u_3$
such that 
$v_3$ is adjacent to $u_3$,
$u_3$ is adjacent to $v_6$,
$u_2$ is adjacent to $v_5$, 
and $v_6$ has degree $2$ in $G$.
Hence, for the vertices $v_5$, $v_6$, and $u_3$,
we are in a similar situation as for 
the vertices $v_3$, $v_4$, and $u_2$.
Setting up an inductive argument as in Case 4.3.3,
we obtain that $G$ is isomorphic to a graph in $\mathcal{B}_2$.  
See the right part of Figure \ref{figcase6}.

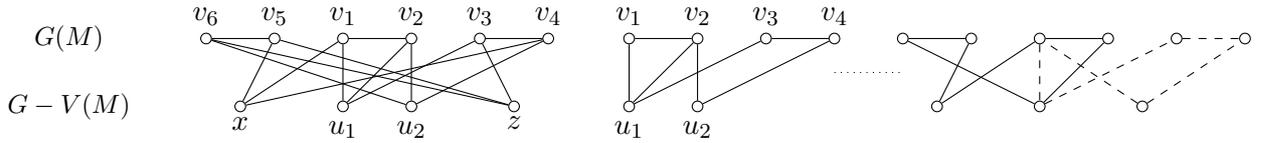
\begin{figure}[H]
\begin{minipage}{0.49\textwidth}
\centering\tiny
\begin{tikzpicture}[scale = 0.9] 
	    \node[label=below:\normalsize $u_1$] (u1) at (0,0) {};
	    \node[label=below:\normalsize $u_2$] (u2) at (1,0) {};
	    \node[label=below:\normalsize $x$] (x) at (-1.5,0) {};
	    \node[label=below:\normalsize $z$] (z) at (2.5,0) {};
	    \node[label=above:\normalsize $v_1$] (v1) at (0,1) {};
	    \node[label=above:\normalsize $v_2$] (v2) at (1,1) {};
	    \node[label=above:\normalsize $v_3$] (v3) at (2,1) {};
	    \node[label=above:\normalsize $v_4$] (v4) at (3,1) {};
	    \node[label=above:\normalsize $v_5$] (v5) at (-1,1) {};
	    \node[label=above:\normalsize $v_6$] (v6) at (-2,1) {};
	    
	    \foreach \from/\to in {v1/v2, v3/v4, v5/v6, x/v5, x/v1, x/v4, u1/v1, u1/v2, u1/v3, u2/v6, u2/v2, u2/v4, z/v3, z/v5, z/v6}
	    \draw [-] (\from) -- (\to);
	    
	    \pgftext[x=-4cm,y=0cm] {\normalsize $G-V(M)$};
	    \pgftext[x=-4cm,y=1cm] {\normalsize $G(M)$};
	    
\end{tikzpicture}
\end{minipage}
\begin{minipage}{0.49\textwidth}
\centering\tiny
\begin{tikzpicture}[scale = 0.9] 
	    \node[label=below:\normalsize $u_1$] (u1) at (0,0) {};
	    \node[label=below:\normalsize $u_2$] (u2) at (1,0) {};
	    \node[label=above:\normalsize $v_1$] (v1) at (0,1) {};
	    \node[label=above:\normalsize $v_2$] (v2) at (1,1) {};
	    \node[label=above:\normalsize $v_3$] (v3) at (2,1) {};
	    \node[label=above:\normalsize $v_4$] (v4) at (3,1) {};
	    
	    \foreach \from/\to in {v1/v2, v3/v4, u1/v2, u1/v1, u1/v3, u2/v2, u2/v4}
	    \draw [-] (\from) -- (\to);
	    
	    \draw[-, dotted] (3,0.5) -- (4,0.5);
	    
	    \node (u5) at (4.5,0) {};
	    \node (u6) at (6,0) {};
	    \node (u7) at (7.5,0) {};
	    
	    \node (w1) at (4,1) {};
	    \node (w2) at (5,1) {};
	    \node (w3) at (6,1) {};
	    \node (w4) at (7,1) {};
	    \node (w5) at (8,1) {};
	    \node (w6) at (9,1) {};

	    \foreach \from/\to in {w1/w2, w3/w4, u5/w2, u5/w3, u6/w1, u6/w4}
	    \draw [-] (\from) -- (\to);
	    
	    \foreach \from/\to in {w3/u6, u7/w3, u7/w6, w5/w6, u6/w5}
	    \draw [-, dashed] (\from) -- (\to);
	    
\end{tikzpicture}
\end{minipage}
\caption{Two situations in Case 6.}\label{figcase6}
\end{figure}

\noindent {\bf Case 7.} {\it $G$ contains a $4$-cycle.}

\medskip

\noindent In view of the previous cases,
there are two edges $v_1v_2$ and $v_3v_4$ in $M$,
and two vertices $u_1$ and $u_2$ in $V(G)\setminus V(M)$,
such that $u_1$ and $u_2$ 
are both adjacent to $v_1$ and $v_3$.
By Case 6, and symmetry between $v_2$ and $v_4$,
we may assume that $v_4$ has a neighbor $u_3$ distinct from $v_3$, $u_1$, and $u_2$.
By Lemma \ref{lemma0}(ii)
for the local pair $v_3u_1$ and $v_4u_3$,
\begin{itemize}
\item either $u_3$ is adjacent to $v_2$,
\item or there is a $6$-cycle $v_3u_1v_5v_6u_3v_4v_3$,
where $v_5v_6 \in M$ is distinct from $v_1v_2$. 
\end{itemize}
\noindent {\bf Case 7.1.} {\it $u_3$ is adjacent to $v_2$,
and $u_3$ has a neighbor $v_5$ distinct from $v_2$ and $v_4$.}

\medskip

\noindent $M$ contains an edge $v_5v_6$
distinct from $v_1v_2$ and $v_3v_4$.
If $v_6$ has a neighbor $y$ 
distinct from $v_5$, $u_1$, and $u_2$,
then Lemma \ref{lemma0}(ii) fails
for the local pair $v_5u_3$ and $v_6y$.
Hence, $v_6$ has no neighbor outside of $\{ u_1,u_2,v_5\}$.
Since $G$ is $2$-connected, 
we may assume, by symmetry between $u_1$ and $u_2$,
that $v_6$ is adjacent to $u_2$.
Since $G$ is $2$-connected, and $n(G)>9$,
at least one of the three vertices 
$v_2$, $v_4$, and $v_5$
has a neighbor $z$ in $V(G)\setminus V(M)$
that is distinct from $u_1$, $u_2$, and $u_3$.
By Lemma \ref{lemma0}(ii),
$z$ has at least two neighbors in $\{ v_2,v_4,v_5\}$.
Since $G$ is $2$-connected, and $n(G)>10$,
$z$ has exactly two neighbors in $\{ v_2,v_4,v_5\}$.

If $z$ has a neighbor $v_7$ 
such that $M$ contains an edge $v_7v_8$
distinct from $v_1v_2$, $v_3v_4$, and $v_5v_6$,
then, since $G$ is $2$-connected,
$v_8$ has a neighbor $z'$ distinct from $v_7$.
By Lemma \ref{lemma0}(ii),
$z'=u_1$, and $v_8$ has degree $2$ in $G$.
Since $G$ is $2$-connected, and $n(G)>12$,
$v_7$ has a neighbor $z''$ distinct from $z$ and $v_8$.
By Lemma \ref{lemma0}(ii)
for the local pair $v_7z''$ and $v_8u_1$,
$z''$ is adjacent to $v_2$ or $v_4$,
which implies that $n(G)=13$.
See the left part of Figure \ref{figcase7.1}.
Hence, $z$ has degree $2$ in $G$.

Since $G$ is $2$-connected, and $n(G) > 10$,
$M$ contains an edge $v_7v_8$
distinct from $v_1v_2$, $v_3v_4$, and $v_5v_6$
such that $u_1$ is adjacent to $v_7$.
Since $G$ is $2$-connected,
$v_8$ has a neighbor $z'$ distinct from $v_7$.
By Lemma \ref{lemma0}(ii)
for the local pair $v_7u_1$ and $v_8z'$,
$z'$ is adjacent to $v_2$ or $v_4$.
If $z'$ is adjacent to $v_2$,
then $z$ is adjacent to $v_4$ and $v_5$,
and Lemma \ref{lemma0}(ii) fails 
for the local pair $v_2z'$ and $v_1u_2$.
See the right part of Figure \ref{figcase7.1}.
If $z'$ is adjacent to $v_4$,
then $z$ is adjacent to $v_2$ and $v_5$,
and Lemma \ref{lemma0}(ii) fails 
for the local pair $v_4z'$ and $v_3u_2$.

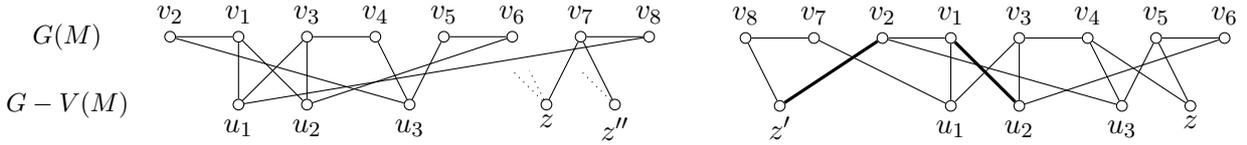
\begin{figure}[H]
\begin{minipage}{0.53\textwidth}
\centering\tiny
\begin{tikzpicture}[scale = 0.9] 
	    \node[label=below:\normalsize $u_1$] (u1) at (0,0) {};
	    \node[label=below:\normalsize $u_2$] (u2) at (1,0) {};
	    \node[label=below:\normalsize $u_3$] (u3) at (2.5,0) {};
	    \node[label=below:\normalsize $z$] (z) at (4.5,0) {};
	    \node[label=below:\normalsize $z^{\prime\prime}$] (z'') at (5.5,0) {};
	    \node[label=above:\normalsize $v_1$] (v1) at (0,1) {};
	    \node[label=above:\normalsize $v_2$] (v2) at (-1,1) {};
	    \node[label=above:\normalsize $v_3$] (v3) at (1,1) {};
	    \node[label=above:\normalsize $v_4$] (v4) at (2,1) {};
	    \node[label=above:\normalsize $v_5$] (v5) at (3,1) {};
	    \node[label=above:\normalsize $v_6$] (v6) at (4,1) {};
	    \node[label=above:\normalsize $v_7$] (v7) at (5,1) {};
	    \node[label=above:\normalsize $v_8$] (v8) at (6,1) {};
	    
	    \foreach \from/\to in {v1/v2, v3/v4, v5/v6, v7/v8, u1/v8, u1/v1, u1/v3, u2/v1, u2/v3, u2/v6, u3/v2, u3/v4, u3/v5, z/v7, z''/v7}
	    \draw [-] (\from) -- (\to);
	    
	    \draw[-,dotted] (z) -- (4,0.5);
	    \draw[-,dotted] (z) -- (4.25,0.5);
	    \draw[-,dotted] (z'') -- (5,0.5);
	    \pgftext[x=-2.5cm,y=0cm] {\normalsize $G-V(M)$};
	    \pgftext[x=-2.5cm,y=1cm] {\normalsize $G(M)$};
	    
\end{tikzpicture}
\end{minipage}
\begin{minipage}{0.46\textwidth}
\centering\tiny
\begin{tikzpicture}[scale = 0.9] 
	    \node[label=below:\normalsize $u_1$] (u1) at (0,0) {};
	    \node[label=below:\normalsize $u_2$] (u2) at (1,0) {};
	    \node[label=below:\normalsize $u_3$] (u3) at (2.5,0) {};
	    \node[label=below:\normalsize $z$] (z) at (3.5,0) {};
	    \node[label=below:\normalsize $z^{\prime}$] (z') at (-2.5,0) {};
	    \node[label=above:\normalsize $v_1$] (v1) at (0,1) {};
	    \node[label=above:\normalsize $v_2$] (v2) at (-1,1) {};
	    \node[label=above:\normalsize $v_3$] (v3) at (1,1) {};
	    \node[label=above:\normalsize $v_4$] (v4) at (2,1) {};
	    \node[label=above:\normalsize $v_5$] (v5) at (3,1) {};
	    \node[label=above:\normalsize $v_6$] (v6) at (4,1) {};
	    \node[label=above:\normalsize $v_7$] (v7) at (-2,1) {};
	    \node[label=above:\normalsize $v_8$] (v8) at (-3,1) {};
	    
	    \foreach \from/\to in {v1/v2, v3/v4, v5/v6, v7/v8, u1/v7, u1/v1, u1/v3, u2/v3, u2/v6, u3/v2, u3/v4, u3/v5, z/v4, z/v5, z'/v8}
	    \draw [-] (\from) -- (\to);
	    
	    \draw[-,very thick] (v2) -- (z');
	    \draw[-,very thick] (v1) -- (u2);

\end{tikzpicture}
\end{minipage}
\caption{Two situations in Case 7.1.}\label{figcase7.1}
\label{fig13}
\end{figure}

\noindent {\bf Case 7.2.} {\it $u_3$ is adjacent to $v_2$,
and $u_3$ has degree $2$ in $G$.}

\medskip

\noindent First, we assume that $v_2$ has a neighbor $u_4$ 
distinct from $v_1$ and $u_3$.
If $u_4$ is adjacent to $v_4$, 
then, by symmetry between $u_3$ and $u_4$,
$u_4$ has degree $2$ in $G$.
Since $G$ is $2$-connected, and $n(G)>8$,
$M$ contains an edge $v_5v_6$ 
distinct from $v_1v_2$ and $v_3v_4$
such that $u_1$ is adjacent to $v_5$.
Since $G$ is $2$-connected, 
$v_6$ has a neighbor $x$ distinct from $v_5$,
and Lemma \ref{lemma0}(ii) fails
for the local pair $v_5u_1$ and $v_6x$.
Hence, $u_4$ is not adjacent to $v_4$.
By Lemma \ref{lemma0}(ii)
for the local pair $v_1u_1$ and $v_2u_4$,
there is a $6$-cycle $v_2u_4v_5v_6u_1v_1v_2$, 
where $v_5v_6 \in M$ is distinct from $v_3v_4$.
By Lemma \ref{lemma0}(ii)
for the local pair $v_1u_2$ and $v_2u_4$,
\begin{itemize}
\item either $u_2$ is adjacent to $v_6$,
\item or there is a $6$-cycle $v_1u_2v_7v_8u_4v_2v_1$, where $v_7v_8 \in M$ is distinct from
$v_3v_4$ and $v_5v_6$.
\end{itemize}
First, we assume that $u_2$ is adjacent to $v_6$.
Since $G$ is $2$-connected, and $n(G)>10$,
$v_4$ or $v_5$ has a neighbor $x$ distinct from 
$v_3$,
$u_3$,
$v_6$, and
$u_4$.
By Lemma \ref{lemma0}(ii),
$x$ is adjacent to $v_4$ and $v_5$.
Since $G$ is $2$-connected, and $n(G)>11$,
$M$ contains an edge $v_7v_8$
such that $x$ is adjacent to $v_7$.
Since $G$ is $2$-connected, 
$v_8$ has a neighbor $y$ distinct from $v_7$.
Now, Lemma \ref{lemma0}(ii) fails
for the local pair $v_7x$ and $v_8y$.
See the left part of Figure \ref{figcase7.2}.
Hence, $u_2$ is not adjacent to $v_6$, 
and there is a $6$-cycle $v_1u_2v_7v_8u_4v_2v_1$, 
where $v_7v_8 \in M$ is distinct from $v_3v_4$ and $v_5v_6$.

If $v_4$ has a neighbor $x$ distinct from $v_3$ and $u_3$,
then, by Lemma \ref{lemma0}(ii),
$x$ is adjacent to $v_5$ and $v_8$. 
Since $G$ is $2$-connected, and $n(G) > 13$, 
$v_6$ or $v_7$ has neighbor $y$
distinct from 
$u_1$,
$v_5$,
$u_2$, and 
$v_8$.
By Lemma \ref{lemma0}(ii),
$y$ is adjacent to $v_6$ and $v_7$,
and, since $G$ is $2$-connected, $n(G)=14$.
See the right part of Figure \ref{figcase7.2}.
Hence, $v_4$ has degree $2$ in $G$.

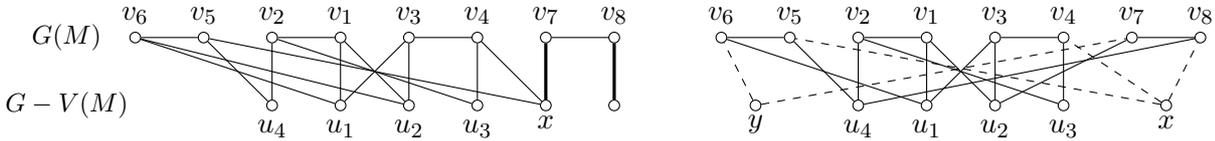
\begin{figure}[H]
\begin{minipage}{0.49\textwidth}
\centering\tiny
\begin{tikzpicture}[scale = 0.9] 
	    \node[label=below:\normalsize $u_1$] (u1) at (0,0) {};
	    \node[label=below:\normalsize $u_2$] (u2) at (1,0) {};
	    \node[label=below:\normalsize $u_3$] (u3) at (2,0) {};
	    \node[label=below:\normalsize $u_4$] (u4) at (-1,0) {};
	    \node[label=below:\normalsize $x$] (x) at (3,0) {};
	    \node (y) at (4,0) {};
	    \node[label=above:\normalsize $v_1$] (v1) at (0,1) {};
	    \node[label=above:\normalsize $v_2$] (v2) at (-1,1) {};
	    \node[label=above:\normalsize $v_3$] (v3) at (1,1) {};
	    \node[label=above:\normalsize $v_4$] (v4) at (2,1) {};
	    \node[label=above:\normalsize $v_5$] (v5) at (-2,1) {};
	    \node[label=above:\normalsize $v_6$] (v6) at (-3,1) {};
	    \node[label=above:\normalsize $v_7$] (v7) at (3,1) {};
	    \node[label=above:\normalsize $v_8$] (v8) at (4,1) {};
	    
	    \foreach \from/\to in {v1/v2, v3/v4, v5/v6, v7/v8, u1/v6, u1/v1, u1/v3, u2/v1, u2/v3, u2/v6, u3/v2, u3/v4, u4/v5, u4/v2, x/v4, x/v5}
	    \draw [-] (\from) -- (\to);
	    
	    \draw[-,very thick] (v8) -- (y);
	    \draw[-,very thick] (v7) -- (x);
	    \pgftext[x=-4cm,y=0cm] {\normalsize $G-V(M)$};
	    \pgftext[x=-4cm,y=1cm] {\normalsize $G(M)$};
	    
\end{tikzpicture}
\end{minipage}
\begin{minipage}{0.49\textwidth}
\centering\tiny
\begin{tikzpicture}[scale = 0.9] 
   \node[label=below:\normalsize $u_1$] (u1) at (0,0) {};
	    \node[label=below:\normalsize $u_2$] (u2) at (1,0) {};
	    \node[label=below:\normalsize $u_3$] (u3) at (2,0) {};
	    \node[label=below:\normalsize $u_4$] (u4) at (-1,0) {};
	    \node[label=below:\normalsize $x$] (x) at (3.5,0) {};
	    \node[label=below:\normalsize $y$] (y) at (-2.5,0) {};
	    \node[label=above:\normalsize $v_1$] (v1) at (0,1) {};
	    \node[label=above:\normalsize $v_2$] (v2) at (-1,1) {};
	    \node[label=above:\normalsize $v_3$] (v3) at (1,1) {};
	    \node[label=above:\normalsize $v_4$] (v4) at (2,1) {};
	    \node[label=above:\normalsize $v_5$] (v5) at (-2,1) {};
	    \node[label=above:\normalsize $v_6$] (v6) at (-3,1) {};
	    \node[label=above:\normalsize $v_7$] (v7) at (3,1) {};
	    \node[label=above:\normalsize $v_8$] (v8) at (4,1) {};
	    
	    \foreach \from/\to in {v1/v2, v3/v4, v5/v6, v7/v8, u1/v6, u1/v1, u1/v3, u2/v1, u2/v3, u2/v7, u3/v2, u3/v4, u4/v5, u4/v2, u4/v8}
	    \draw [-] (\from) -- (\to);
	    
	    \foreach \from/\to in {x/v4, x/v8, x/v5, y/v7, y/v6}
	    \draw [-, dashed] (\from) -- (\to);

\end{tikzpicture}
\end{minipage}
\caption{Two situations in Case 7.2.}\label{figcase7.2}
\end{figure}

\noindent By Lemma \ref{lemma0}(ii),
$v_5$ and $v_8$ have degree $2$ in $G$.
Since $G$ is $2$-connected, and $n(G) > 12$, 
$v_6$ or $v_7$ has neighbor $x$
distinct from 
$u_1$,
$v_5$,
$u_2$, and 
$v_8$.
By Lemma \ref{lemma0}(ii),
$x$ is adjacent to $v_6$ and $v_7$,
and, since $G$ is $2$-connected, $n(G)=13$.
Hence, $v_2$ has degree $2$ in $G$.

By symmetry between $v_2$ and $v_4$, 
we may assume that $v_4$ has degree $2$ in $G$.
Since $G$ is $2$-connected, and $n(G)>7$, 
$M$ contains an edge $v_5v_6$ distinct from $v_1v_2$ and $v_3v_4$
such that $u_1$ is adjacent to $v_5$.
Since $G$ is $2$-connected,
$v_6$ has a neighbor $x$ distinct from $v_5$.
Now, Lemma \ref{lemma0}(ii) fails
for the local pair $v_5u_1$ and $v_6x$.

\medskip

\noindent {\bf Case 7.3.} {\it $u_3$ is not adjacent to $v_2$.}

\medskip

\noindent As observed above,
there is a $6$-cycle $v_3u_1v_5v_6u_3v_4v_3$,
where $v_5v_6 \in M$ is distinct from $v_1v_2$.
Since $G$ is $2$-connected, 
$v_2$ has a neighbor $u_4$ 
distinct from $v_1$ and $u_3$.
By symmetry between $u_3$ and $u_4$, 
we may assume that $u_4$ is not adjacent to $v_4$.
By Lemma \ref{lemma0}(ii) 
for the local pair $v_1u_1$ and $v_2u_4$,
$u_4$ is adjacent to $v_6$.

First, we assume that $u_2$ is adjacent to $v_5$. 
If $M$ contains an edge $v_7v_8$ 
distinct from $v_1v_2$, $v_3v_4$, and $v_5v_6$
such that $u_3$ is adjacent to $v_7$,
then $v_8$ has a neighbor $x$ distinct from $v_7$,
and Lemma \ref{lemma0}(ii) fails
for the local pair $v_7u_3$ and $v_8x$.
Hence, $u_3$ has degree $2$ in $G$.
Similarly, it follows that 
$u_4$ has degree $2$ in $G$.
Since $G$ is $2$-connected, and $n(G)>10$,
$v_2$ or $v_4$ has a neighbor $x$.
By Lemma \ref{lemma0}(ii),
$x$ is adjacent to $v_2$ and $v_4$,
and $n(G)=11$.
See the left part of Figure \ref{figcase7.3}.
Hence, $u_2$ is not adjacent to $v_5$.

By Lemma \ref{lemma0}(ii) 
for the local pair $v_1u_2$ and $v_2u_4$,
there is a $6$-cycle $v_1u_2v_7v_8u_4v_2v_1$,
where $v_7v_8 \in M$ is distinct from $v_3v_4$ and $v_5v_6$.
By Lemma \ref{lemma0}(ii) 
for the local pair $v_3u_2$ and $v_4u_3$,
$u_3$ is adjacent to $v_8$.
If $v_2$ or $v_4$ have a neighbor $x$ 
distinct from 
$v_1$,
$u_4$,
$v_3$, and
$u_3$,
then, by Lemma \ref{lemma0}(ii),
$x$ is adjacent to $v_2$ and $v_4$.
Similarly, 
if $v_5$ or $v_7$ have a neighbor $y$ 
distinct from 
$v_6$,
$u_1$,
$v_8$, and
$u_2$,
then, by Lemma \ref{lemma0}(ii),
$y$ is adjacent to $v_5$ and $v_7$.
Since, by Lemma \ref{lemma0}(ii),
$x$ and $y$ have degree $2$ in $G$, 
if they exist,
we obtain $n(G)\leq 14$.
See the right part of Figure \ref{figcase7.3}.

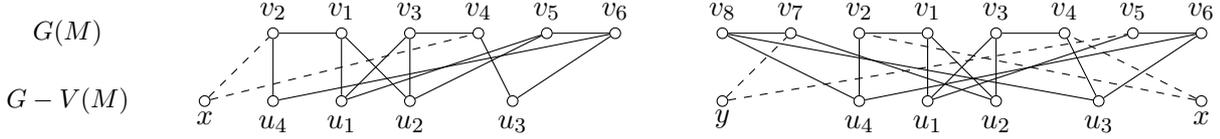
\begin{figure}[H]
\begin{minipage}{0.49\textwidth}
\centering\tiny
\begin{tikzpicture}[scale = 0.9] 
	    \node[label=below:\normalsize $u_1$] (u1) at (0,0) {};
	    \node[label=below:\normalsize $u_2$] (u2) at (1,0) {};
	    \node[label=below:\normalsize $u_3$] (u3) at (2.5,0) {};
	    \node[label=below:\normalsize $u_4$] (u4) at (-1,0) {};
	    \node[label=below:\normalsize $x$] (x) at (-2,0) {};
	    \node[label=above:\normalsize $v_1$] (v1) at (0,1) {};
	    \node[label=above:\normalsize $v_2$] (v2) at (-1,1) {};
	    \node[label=above:\normalsize $v_3$] (v3) at (1,1) {};
	    \node[label=above:\normalsize $v_4$] (v4) at (2,1) {};
	    \node[label=above:\normalsize $v_5$] (v5) at (3,1) {};
	    \node[label=above:\normalsize $v_6$] (v6) at (4,1) {};
	    
	    \foreach \from/\to in {v1/v2, v3/v4, v5/v6, u1/v5, u1/v1, u1/v3, u2/v1, u2/v3, u2/v5, u3/v4, u3/v6, u4/v2, u4/v6}
	    \draw [-] (\from) -- (\to);
	    
	    \draw[-,dashed] (x)--(v2);
	    \draw[-,dashed] (x)--(v4);
	    
	    \pgftext[x=-4cm,y=0cm] {\normalsize $G-V(M)$};
	    \pgftext[x=-4cm,y=1cm] {\normalsize $G(M)$};
	    
\end{tikzpicture}
\end{minipage}
\begin{minipage}{0.49\textwidth}
\centering\tiny
\begin{tikzpicture}[scale = 0.9] 
	    \node[label=below:\normalsize $u_1$] (u1) at (0,0) {};
	    \node[label=below:\normalsize $u_2$] (u2) at (1,0) {};
	    \node[label=below:\normalsize $u_3$] (u3) at (2.5,0) {};
	    \node[label=below:\normalsize $u_4$] (u4) at (-1,0) {};
	    \node[label=below:\normalsize $x$] (x) at (4,0) {};
	    \node[label=below:\normalsize $y$] (y) at (-3,0) {};
	    \node[label=above:\normalsize $v_1$] (v1) at (0,1) {};
	    \node[label=above:\normalsize $v_2$] (v2) at (-1,1) {};
	    \node[label=above:\normalsize $v_3$] (v3) at (1,1) {};
	    \node[label=above:\normalsize $v_4$] (v4) at (2,1) {};
	    \node[label=above:\normalsize $v_5$] (v5) at (3,1) {};
	    \node[label=above:\normalsize $v_6$] (v6) at (4,1) {};
	    \node[label=above:\normalsize $v_7$] (v7) at (-2,1) {};
	    \node[label=above:\normalsize $v_8$] (v8) at (-3,1) {};
	    
	    \foreach \from/\to in {v1/v2, v3/v4, v5/v6, v7/v8, u1/v5, u1/v1, u1/v3, u2/v1, u2/v3, u2/v7, u3/v4, u3/v6, u3/v8, u4/v2, u4/v6, u4/v8}
	    \draw [-] (\from) -- (\to);
	    
	    \foreach \from/\to in {x/v2, x/v4, y/v5, y/v7}
	    \draw [-,dashed] (\from) -- (\to);

\end{tikzpicture}
\end{minipage}
\caption{Two situations in Case 7.3.}\label{figcase7.3}
\end{figure}

\medskip

\noindent {\bf Case 8.} {\it $M$ contains an edge $v_1v_2$,
where $v_1$ and $v_2$ have degree $2$ in $G$.}

\medskip

\noindent Let $u_1$ be the neighbor of $v_1$ distinct from $v_2$,
and let $u_2$ be the neighbor of $v_2$ distinct from $v_1$.
By Lemma \ref{lemma0}(ii) 
for the local pair $v_1u_1$ and $v_2u_2$,
there is a $6$-cycle $v_1u_1v_3v_4u_2v_2v_1$,
where $v_3v_4\in M$.

If $v_3$ and $v_4$ have degree $2$ in $G$, then, 
since $G$ is $2$-connected, and $n(G)>6$,
$M$ contains an edge $v_5v_6$ distinct from $v_1v_2$ and $v_3v_4$
such that $u_1$ is adjacent to $v_5$.
Since $G$ is $2$-connected, 
$v_6$ has a neighbor $x$ distinct from $v_5$.
By Lemma \ref{lemma0}(ii) 
for the local pair $v_5u_1$ and $v_6x$,
$x=u_2$, and $n(G)=8$.
Hence, by symmetry between $v_3$ and $v_4$,
we may assume that $v_4$ has a neighbor $u_3$ 
distinct from $u_2$ and $v_3$.
By Lemma \ref{lemma0}(ii) 
for the local pair $v_3u_1$ and $v_4u_3$,
there is a $6$-cycle $v_3u_1v_5v_6u_3v_4v_3$,
where $v_5v_6\in M$ is distinct from $v_1v_2$.
In view of Case 7, 
we may assume that $v_6$ is not adjacent to $u_2$.
By Lemma \ref{lemma0}(ii), 
$v_6$ has degree $2$ in $G$.

First, we assume that $u_2$ has a neighbor $v_7$
distinct from $v_2$ and $v_4$.
By Case 6 and Lemma \ref{lemma0}(ii), 
$v_7$ is distinct from $v_3$ and $v_5$.
Let $M$ contain the edge $v_7v_8$.
Since $G$ is $2$-connected,
$v_8$ has a neighbor $x$ distinct from $v_7$.
By Lemma \ref{lemma0}(ii) 
for the local pair $v_7u_2$ and $v_8x$,
$x$ is adjacent to $v_3$.
This implies that $x$ is distinct from $u_3$,
and that $v_8$ has degree $2$ in $G$.
By Case 7, 
$u_3$ is not adjacent to $v_7$,
and
$x$ is not adjacent to $v_5$.
By Lemma \ref{lemma0}(ii) 
for the local pair $v_3x$ and $v_4u_3$,
there is a $6$-cycle $v_3xv_9v_{10}u_3v_4v_3$,
where $v_9v_{10}\in M$ is distinct from 
$v_1v_2$,
$v_5v_6$, and
$v_7v_8$.
If $v_5$ or $v_9$ has a neighbor $y$
distinct from
$u_1$,
$v_6$,
$x$, and 
$v_{10}$,
then, by Lemma \ref{lemma0}(ii),
$y$ is adjacent to $v_5$ and $v_9$.
Similarly, 
if $v_7$ or $v_{10}$ has a neighbor $z$
distinct from
$u_2$,
$v_8$,
$u_3$, and 
$v_9$,
then, by Lemma \ref{lemma0}(ii),
$z$ is adjacent to $v_7$ and $v_{10}$.
If $y$ and $z$ both exist,
then, by Lemma \ref{lemma0}(ii) 
for the local pair $v_9y$ and $v_{10}z$,
there is a $6$-cycle $v_9yv_{11}v_{12}zv_{10}v_9$,
where $v_{11}v_{12}\in M$ is distinct from 
$v_1v_2$,
$v_3v_4$,
$v_5v_6$, and
$v_7v_8$.
By Lemma \ref{lemma0}(ii),
$v_{11}$ and $v_{12}$ have degree $2$ in $G$,
which implies $n(G)=18$.
See Figure \ref{figcase8}.
If at most one of $y$ or $z$ exists,
then $n(G)\leq 15$.

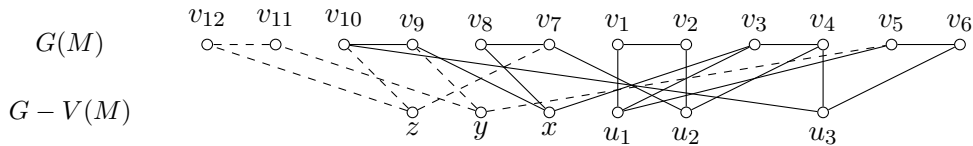
\begin{figure}[H]
\centering\tiny
\begin{tikzpicture}[scale = 0.9] 
	    \node[label=below:\normalsize $u_1$] (u1) at (0,0) {};
	    \node[label=below:\normalsize $u_2$] (u2) at (1,0) {};
	    \node[label=below:\normalsize $u_3$] (u3) at (3,0) {};
	    \node[label=below:\normalsize $x$] (x) at (-1,0) {};
	    \node[label=below:\normalsize $y$] (y) at (-2,0) {};
	    \node[label=below:\normalsize $z$] (z) at (-3,0) {};
	    \node[label=above:\normalsize $v_1$] (v1) at (0,1) {};
	    \node[label=above:\normalsize $v_2$] (v2) at (1,1) {};
	    \node[label=above:\normalsize $v_3$] (v3) at (2,1) {};
	    \node[label=above:\normalsize $v_4$] (v4) at (3,1) {};
	    \node[label=above:\normalsize $v_5$] (v5) at (4,1) {};
	    \node[label=above:\normalsize $v_6$] (v6) at (5,1) {};
	    \node[label=above:\normalsize $v_7$] (v7) at (-1,1) {};
	    \node[label=above:\normalsize $v_8$] (v8) at (-2,1) {};
	    \node[label=above:\normalsize $v_9$] (v9) at (-3,1) {};
	    \node[label=above:\normalsize $v_{10}$] (v10) at (-4,1) {};
	    \node[label=above:\normalsize $v_{11}$] (v11) at (-5,1) {};
	    \node[label=above:\normalsize $v_{12}$] (v12) at (-6,1) {};
	    
	    \foreach \from/\to in {v1/v2, v3/v4, v5/v6, v7/v8, v9/v10, u1/v5, u1/v1, u1/v3, u2/v2, u2/v4, u2/v7, u3/v4, u3/v6, u3/v10, x/v9, x/v8, x/v3}
	    \draw [-] (\from) -- (\to);
	    
	    \foreach \from/\to in {v11/v12, z/v10, z/v7, z/v12, y/v9, y/v5, y/v11}
	    \draw [-,dashed] (\from) -- (\to);
	    
	    \pgftext[x=-8cm,y=0cm] {\normalsize $G-V(M)$};
	    \pgftext[x=-8cm,y=1cm] {\normalsize $G(M)$};
\end{tikzpicture}
\caption{A situation in Case 8.}\label{figcase8}
\end{figure}

\noindent Hence, $u_2$ has degree $2$ in $G$.
By Case 6, and Lemma \ref{lemma0}(ii),
$v_3$ has degree $2$ in $G$.
Setting up an inductive argument as in Case 4.3.3 and Case 6,
it follows that $G$ is isomorphic to a graph in ${\cal B}_1$.
See Figure \ref{figcase8b}.

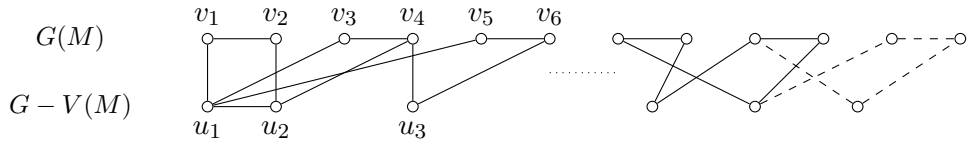
\begin{figure}[H]
\centering\tiny
\begin{tikzpicture}[scale = 0.9] 
	    \node[label=below:\normalsize $u_1$] (u1) at (0,0) {};
	    \node[label=below:\normalsize $u_2$] (u2) at (1,0) {};
	    \node[label=below:\normalsize $u_3$] (u3) at (3,0) {};
	    \node[label=above:\normalsize $v_1$] (v1) at (0,1) {};
	    \node[label=above:\normalsize $v_2$] (v2) at (1,1) {};
	    \node[label=above:\normalsize $v_3$] (v3) at (2,1) {};
	    \node[label=above:\normalsize $v_4$] (v4) at (3,1) {};
	    \node[label=above:\normalsize $v_5$] (v5) at (4,1) {};
	    \node[label=above:\normalsize $v_6$] (v6) at (5,1) {};
	    
	    \foreach \from/\to in {v1/v2, v3/v4, v5/v6, u1/u2, u1/v1, u1/v3, u1/v5, u2/v2, u2/v4, u3/v4, u3/v6}
	    \draw [-] (\from) -- (\to);
	    
	    \draw[-, dotted] (5,0.5) -- (6,0.5);
	    
	    \node (u5) at (6.5,0) {};
	    \node (u6) at (8,0) {};
	    \node (u7) at (9.5,0) {};
	    
	    \node (w1) at (6,1) {};
	    \node (w2) at (7,1) {};
	    \node (w3) at (8,1) {};
	    \node (w4) at (9,1) {};
	    \node (w5) at (10,1) {};
	    \node (w6) at (11,1) {};
	    
	    \pgftext[x=-2cm,y=0cm] {\normalsize $G-V(M)$};
	    \pgftext[x=-2cm,y=1cm] {\normalsize $G(M)$};
	    
	    \foreach \from/\to in {w1/w2, w3/w4, u5/w2, u5/w3, u6/w1, u6/w4}
	    \draw [-] (\from) -- (\to);
	    
	    \foreach \from/\to in {u7/w3, u7/w6, w5/w6, u6/w5}
	    \draw [-, dashed] (\from) -- (\to);
\end{tikzpicture}
\caption{The final situation in Case 8.}\label{figcase8b}
\end{figure}

\medskip

\noindent {\bf Case 9.} {\it $M$ contains an edge $v_1v_2$,
where $v_1$ and $v_2$ have degree $3$ in $G$.}

\medskip

\noindent Let $u_1$ and $u_2$ be the neighbors of $v_1$ distinct from $v_2$,
and 
let $u_3$ and $u_4$ be the neighbors of $v_2$ distinct from $v_1$.
By Lemma \ref{lemma0}(ii) 
for the local pair $v_1u_1$ and $v_2u_4$,
there is a $6$-cycle $v_1u_1v_3v_4u_4v_2v_1$,
where $v_3v_4\in M$.
By Lemma \ref{lemma0}(ii) 
for the local pair $v_1u_1$ and $v_2u_3$,
there is a $6$-cycle $v_1u_1v_5v_6u_3v_2v_1$,
where $v_5v_6\in M$.
By Case 7, 
the edges $v_3v_4$ and $v_5v_6$ are distinct.
By Lemma \ref{lemma0}(ii) 
for the local pair $v_1u_2$ and $v_2u_4$,
there is a $6$-cycle $v_1u_2v_7v_8u_4v_2v_1$,
where $v_7v_8\in M$.
By Case 7, 
the edges $v_3v_4$ and $v_7v_8$ are distinct.
If the edge $v_7v_8$ equals the edge $v_5v_6$, 
then $v_7=v_6$ and $v_8=v_5$. 
By Lemma \ref{lemma0}(ii) 
for the local pair $v_5u_4$ and $v_6u_3$,
$u_3$ is adjacent to $v_3$.
By Lemma \ref{lemma0}(ii) 
for the local pair $v_5u_1$ and $v_6u_2$,
$u_2$ is adjacent to $v_4$,
and $n(G)=10$.
Hence, the edges $v_7v_8$ and $v_5v_6$ are distinct.
By Lemma \ref{lemma0}(ii) 
for the local pair $v_1u_2$ and $v_2u_3$,
there is a $6$-cycle $v_1u_2v_9v_{10}u_3v_2v_1$,
where $v_{9}v_{10} \in M$.
By Case 7, 
$v_{9}v_{10}$ is distinct from $v_5v_6$ and $v_7v_8$.
If $v_9v_{10} = v_3v_4$, then $v_9 = v_4$ and $v_{10} = v_3$,
and Lemma \ref{lemma0}(ii) fails
for the local pair $v_3u_1$ and $v_4u_2$.
Hence, the edges $v_9v_{10}$ and $v_3v_4$ are distinct.

Let $U= \{(8,10), (5,9), (4,6), (3,7)\}$.
By Lemma \ref{lemma0}(ii),
for every $(i,j) \in U$,
if $v_i$ or $v_j$ has a neighbor $x$
not in $\{ u_1,u_2,u_3,u_4\}$,
then $x$ is adjacent to $v_i$ and $v_j$.
By Case 8, 
the set $Y=N_G(\{ v_1,\ldots,v_{10}\}) \setminus \{u_1,u_2,u_3,u_4 \}$ 
contains at least two vertices. 
Note that every vertex in $Y$ has exactly two neighbors in $\{ v_1,\ldots,v_{10}\}$,
which implies $|Y|\leq 4$.
If some vertex $y\in Y$ has a neighbor $w_1$
not in $\{ v_1,\ldots,v_{10}\}$,
then $M$ contains an edge $w_1w_2$,
and, by Lemma \ref{lemma0}(ii),
$w_1$ and $w_2$ have all their neighbors in $Y\cup \{w_1,w_2\}$.
By Case 8,
there are at most $2$ vertices in $V(M)\setminus \{ v_1,\ldots,v_{10}\}$
that have neighbors in $Y$, which implies
that $n(G)\leq 10+4+4+2=20$.
See Figure \ref{figcase9}.

\begin{figure}[H]
\centering\tiny
\begin{tikzpicture}[scale = 0.9] 
	    \node[label=below:\normalsize $u_1$] (u1) at (-1,0) {};
	    \node[label=below:\normalsize $u_2$] (u2) at (0,0) {};
	    \node[label=below:\normalsize $u_3$] (u3) at (1,0) {};
	    \node[label=below:\normalsize $u_4$] (u4) at (2,0) {};
	    \node (y1) at (4,-0.5) {};
	    \node (y2) at (5,-0.5) {};
	    \node (y3) at (6,-0.5) {};
	    \node (y4) at (7,-0.5) {};
	    \node[label=above:\normalsize $v_1$] (v1) at (0,1) {};
	    \node[label=above:\normalsize $v_2$] (v2) at (1,1) {};
	    \node[label=above:\normalsize $v_3$] (v3) at (2,1) {};
	    \node[label=above:\normalsize $v_4$] (v4) at (3,1) {};
	    \node[label=above:\normalsize $v_5$] (v5) at (-2,1) {};
	    \node[label=above:\normalsize $v_6$] (v6) at (-1,1) {};
	    \node[label=above:\normalsize $v_7$] (v7) at (4,1) {};
	    \node[label=above:\normalsize $v_8$] (v8) at (5,1) {};
	    \node[label=above:\normalsize $v_9$] (v9) at (-4,1) {};
	    \node[label=above:\normalsize $v_{10}$] (v10) at (-3,1) {};
	    \node[label=above:\normalsize $w_1$] (w1) at (7,1) {};
	    \node[label=above:\normalsize $w_2$] (w2) at (8,1) {};
	    
	    \foreach \from/\to in {v1/v2, v3/v4, v5/v6, v7/v8, v9/v10, u1/v5, u1/v1, u1/v3, u2/v1, u2/v7, u2/v9, u3/v2, u3/v6, u3/v10, u4/v2, u4/v4, u4/v8}
	    \draw [-] (\from) -- (\to);
	    
	   \draw[-,dashed] (w1) -- (w2);
	    
	    \draw[-,dashed] (w1) -- (6,0.25);
	    \draw[-,dashed] (w1) -- (6.5,0.25);
	    \draw[-,dashed] (w2) -- (7,0.25);
	    \draw[-,dashed] (w2) -- (7.5,0.25);

	    \draw[-,dashed] (y1) -- (3,0.2);
	    \draw[-,dashed] (y1) -- (3.25,0.2);
	    \draw[-,dashed] (y2) -- (3.75,0.2);
	    \draw[-,dashed] (y2) -- (4,0.2);
	    \draw[-,dashed] (y3) -- (4.5,0.2);
	    \draw[-,dashed] (y3) -- (4.75,0.2);
	    \draw[-,dashed] (y4) -- (5.25,0.2);
	    \draw[-,dashed] (y4) -- (5.5,0.2);
	    
	    \pgftext[x=-6cm,y=0cm] {\normalsize $G-V(M)$};
	    \pgftext[x=-6cm,y=1cm] {\normalsize $G(M)$};
	    
	    \pgftext[x=8cm,y=-0.5cm] {\normalsize $Y$};
	    
	    \draw (5.5,-0.5) ellipse (2cm and 0.3cm);
\end{tikzpicture}
\caption{The final situation in Case 9.}\label{figcase9}
\end{figure}
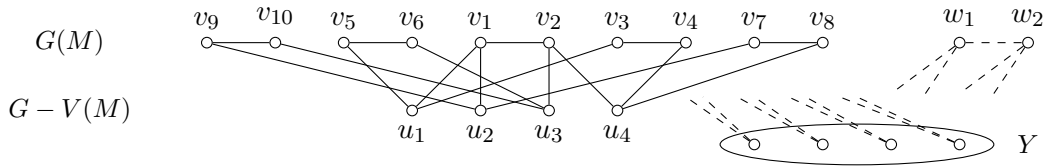

\medskip

\noindent {\bf Case 10.} {\it $M$ contains an edge $v_1v_2$,
where $v_1$ has degree $2$, and $v_2$ have degree $3$ in $G$.}

\medskip

\noindent Let $u_1$ be the neighbor of $v_1$ distinct from $v_2$,
and
let $u_2$ and $u_3$ be the neighbors of $v_2$ distinct from $v_1$.
By Case 6, $u_1$, $u_2$, and $u_3$ are distinct.
By Lemma \ref{lemma0}(ii) 
for the local pair $v_1u_1$ and $v_2u_2$,
there is a $6$-cycle $v_1u_1v_3v_4u_2v_2v_1$,
where $v_3v_4 \in M$.
By Lemma \ref{lemma0}(ii) 
for the local pair $v_1u_1$ and $v_2u_3$,
there is a $6$-cycle $v_1u_1v_5v_6u_3v_2v_1$,
where $v_5v_6 \in M$.
By Case 6 and Case 7,
the edges $v_3v_4$ and $v_5v_6$ are distinct,
$v_4$ is not adjacent to $u_3$, and
$v_6$ is not adjacent to $u_2$.

First, we assume that $v_4$ or $v_6$ has a neighbor
distinct from
$v_3$,
$u_2$,
$v_5$, and
$u_3$.
By Lemma \ref{lemma0}(ii),
$x$ is adjacent to $v_4$ and $v_6$.
By Case 9, 
$v_3$ and $v_5$ have degree $2$ in $G$.
If $x$ has a neighbor $v_7$ distinct from $v_4$ and $v_6$,
then $M$ contains an edge $v_7v_8$
distinct from $v_1v_2$, $v_3v_4$, and $v_5v_6$.
Since $G$ is $2$-connected, $v_8$ has a neighbor $y$ distinct from $v_7$,
and Lemma \ref{lemma0}(ii) fails 
for the local pair $v_7x$ and $v_8y$.
Hence, $x$ has degree $2$ in $G$.
Since $G$ is $2$-connected, and $n(G)>10$,
$u_3$ has a neighbor $v_7$,
and 
$M$ contains an edge $v_7v_8$
distinct from $v_1v_2$, $v_3v_4$, and $v_5v_6$.
Since $G$ is $2$-connected, $v_8$ has a neighbor $y$ distinct from $v_7$,
and Lemma \ref{lemma0}(ii) fails 
for the local pair $v_7u_3$ and $v_8y$.
Hence, $v_4$ and $v_6$ have degree $2$ in $G$.

By Case 8,
$v_3$ has a neighbor $u_4$ distinct from $v_4$ and $u_1$,
and 
$v_5$ has a neighbor $u_5$ distinct from $v_6$ and $u_1$.
By Case 7, $u_4\not=u_5$.
If $u_4=u_3$,
then, by Lemma \ref{lemma0}(ii) 
for the local pair $v_4u_2$ and $v_3u_3$,
$u_5=u_2$, and $n(G)=9$.
Hence, $u_4\not=u_3$.
If $u_5=u_2$,
then, by Lemma \ref{lemma0}(ii) 
for the local pair $v_5u_2$ and $v_6u_3$,
$u_4=u_3$, and $n(G)=9$.
Hence, $u_5\not=u_2$.
By Lemma \ref{lemma0}(ii) 
for the local pair $v_4u_2$ and $v_3u_4$,
there is a $6$-cycle $v_4u_2v_7v_8u_4v_3v_4$,
where $v_7v_8 \in M$.
By Lemma \ref{lemma0}(ii) 
for the local pair $v_6u_3$ and $v_5u_5$,
there is a $6$-cycle $v_6u_3v_9v_{10}u_5v_5v_6$,
where $v_9v_{10}\in M$.
By Case 9,
all five edges 
$v_1v_2$,
$v_3v_4$,
$v_5v_6$,
$v_7v_8$, and 
$v_9v_{10}$ are distinct.
By Lemma \ref{lemma0}(ii),
$v_8$ and $v_{10}$ have degree $2$ in $G$.
By Case 8,
$v_7$ and $v_9$ have degree $3$ in $G$.
By Case 6,
$v_7$ is not adjacent to $u_4$,
and $v_9$ is not adjacent to $u_5$.
If $v_7$ is adjacent to $u_5$,
then, by Lemma \ref{lemma0}(ii) 
for the local pair $v_8u_4$ and $v_7u_5$,
$v_9$ is adjacent to $u_4$, and $n(G)=15$.
Similarly, 
if $v_9$ is adjacent to $u_4$,
then, by Lemma \ref{lemma0}(ii), 
$v_7$ is adjacent to $u_5$, and $n(G)=15$.
Hence, $v_7$ is not adjacent to $u_5$,
and $v_9$ is not adjacent to $u_4$.

We are again in a position to set up an inductive argument.
We give new names to some vertices,
which facilitates to recognize the underlying structure of some $L_k$.
Let 
$w_1'=v_6$,
$w_2'=v_{10}$,
$u_0'=v_2$,
$u_1'=u_3$, 
$u_2'=v_9$,
$v_0'=u_1$,
$v_1'=v_5$, and  
$v_2'=u_5$.
Let $k\geq 2$ be the largest positive integer such that 
\begin{itemize}
\item for every $i\in [k-1]$, 
\begin{itemize}
\item $u'_i$ has neighbors $u_{i-1}'$, $w_i'$, and $u_{i+1}'$,
\item $v'_i$ has neighbors $v_{i-1}'$, $w_i'$, and $v_{i+1}'$,
\item $w_i'$ has degree $2$ in $G$, 
\end{itemize}
\item for every $i\in [k]$, 
\begin{itemize}
\item if $i$ is even, then $u_i'w_i'\in M$,
\item if $i$ is odd, then $v_i'w_i'\in M$, and
\end{itemize}
\item the set
$$V=\{ v_1,v_2,v_3,v_4,v_7,v_8\}\cup \{ u_1,u_2,u_4\}\cup \bigcup_{i=1}^k \{ u_i',v_i',w_i'\}$$
contains $9+3k$ distinct vertices.
\end{itemize}
Note that all conditions are satisfied for $k=2$
by the previous discussion.
Let $x=u_k'$ and $y=v_k'$, if $k$ is even,
and let
$x=v_k'$ and $y=u_k'$, if $k$ is odd.
By Case 8, $x$ has degree $3$ in $G$.
If $x$ has a neighbor $z$ outside of $V$,
then, by Lemma \ref{lemma0}(ii)
for the local pair $xz$ and $w_k'y$,
$M$ contains an edge $v'v''$ with $v',v''\not\in V$
such that $v'$ is adjacent to $y$,
and $v''$ is adjacent to $z$.
By Lemma \ref{lemma0}(ii),
$v''$ has degree $2$ in $G$.
By Case 8,
$v'$ has degree $3$ in $G$.
Setting $w_{k+1}'$ equals to $v''$,
and setting $u'_{k+1}$ and $v'_{k+1}$ to $v'$ and $z$
suitably depending on the parity of $k$,
we obtain a contradiction to the maximality of $k$.
See Figure \ref{figcase10}.

\begin{figure}[H]
\centering\tiny
\begin{tikzpicture}[scale = 0.9] 
	    \node[label=below:\normalsize $u_1$] (u1) at (0,0) {};
	    \node[label=below:\normalsize $u_2$] (u2) at (1,0) {};
	    \node[label=below:\normalsize $u_3$] (u3) at (2,0) {};
	    \node[label=below:\normalsize $u_4$] (u4) at (-1,0) {};
	    \node[label=below:\normalsize $u_5$] (u5) at (3,0) {};

	    \node[label=above:\normalsize $v_1$] (v1) at (0,1) {};
	    \node[label=above:\normalsize $v_2$] (v2) at (1,1) {};
	    \node[label=above:\normalsize $v_3$] (v3) at (-1,1) {};
	    \node[label=above:\normalsize $v_4$] (v4) at (-2,1) {};
	    \node[label=above:\normalsize $v_5$] (v5) at (2,1) {};
	    \node[label=above:\normalsize $v_6$] (v6) at (3,1) {};
	    \node[label=above:\normalsize $v_7$] (v7) at (-3,1) {};
	    \node[label=above:\normalsize $v_8$] (v8) at (-4,1) {};
	    \node[label=above:\normalsize $v_9$] (v9) at (4,1) {};
	    \node[label=above:\normalsize $v_{10}$] (v10) at (5,1) {};
	    
	    \foreach \from/\to in {v1/v2, v3/v4, v5/v6, v7/v8, v9/v10, u1/v5, u1/v1, u1/v3, u2/v2, u2/v4, u2/v7, u3/v2, u3/v6, u3/v9, u4/v3, u4/v8, u5/v5, u5/v10}
	    \draw [-] (\from) -- (\to);
	    
	    \pgftext[x=-6cm,y=0cm] {\normalsize $G-V(M)$};
	    \pgftext[x=-6cm,y=1cm] {\normalsize $G(M)$};
	    
	    \draw[-,dotted] (5,0.5) -- (6,0.5);
	    
	    \node (u5) at (6.5,0) {};
	    
	    \node (w1) at (6,1) {};
	    \node (w2) at (7,1) {};
	    \node (w3) at (8,1) {};

	    \foreach \from/\to in {w1/w2, w3/w4, u5/w2, u5/w3, u6/w1, u6/w4}
	    \draw [-] (\from) -- (\to);
	    
	    \draw[-,dashed] (w3) -- (u4);
	    \draw[-,dashed] (u6) -- (v7);
	    
\end{tikzpicture}
\caption{The final situation in Case 10.}\label{figcase10}
\end{figure}
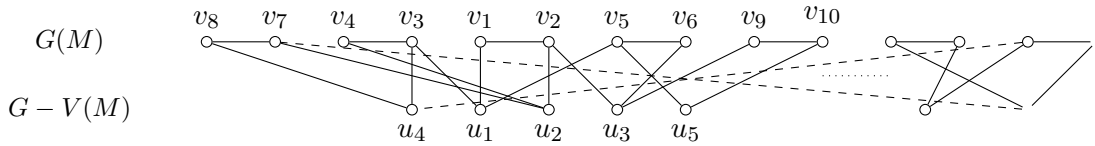
This implies that 
$x$ is adjacent to $u_4$
and 
that $y$ is adjacent to $v_7$,
which implies the $G$ is isomorphic to a graph in ${\cal B}_2$.
This completes the proof.
\end{proof}

\end{document}